\newtheorem{theorem}{Theorem}[section]
\newtheorem{corollary}[theorem]{Corollary}
\newtheorem{proposition}[theorem]{Proposition}
\newtheorem{lemma}[theorem]{Lemma}
\newtheorem{question*}{Question}
\newtheorem{problem*}{Problem}
\theoremstyle{definition}
\theoremstyle{remark}
\newtheorem*{remark}{Remark}
\newtheorem*{remarks}{Remarks}
\numberwithin{equation}{section}
\crefname{figure}{Figure}{Figures}
\theoremstyle{plain}
\newtheorem*{theorem*}{Theorem}
\crefname{theorems}{Theorem}{Theorems}
\crefname{corollaries}{Corollary}{Corollaries}
\newtheorem*{corollary*}{Corollary}
\crefname{corollaries*}{Corollary}{Corollaries}
\crefname{lemma}{Lemma}{Lemmas}
\crefname{proposition}{Proposition}{Propositions}
\crefname{conjectures}{Conjecture}{Conjectures}
\newtheorem*{conjonjecture*}{Conjecture}
\crefname{conjonjectures*}{Conjecture}{Conjectures}
\crefname{definitions}{Definition}{Definitions}
\crefname{hypotheses}{Hypothesis}{Hypotheses}
\newcommand{\Z}{\mathbb{Z}}
\newcommand{\R}{\mathbb{R}}
\newcommand{\Q}{\mathbb{Q}}
\newcommand{\kn}{\mathfrak{n}}
\newcommand{\kd}{\mathfrak{d}}
\newcommand{\kp}{\mathfrak{p}}
\newcommand{\re}{\textup{Re}}
\newcommand{\kb}{\mathfrak{b}}
\renewcommand{\tilde}{\widetilde}
\newcommand{\GL}{\mathrm{GL}}
\newcommand{\N}{\mathrm{N}}
\DeclareFontFamily{U}  {MnSymbolF}{}
\DeclareSymbolFont{symbolsMN}{U}{MnSymbolF}{m}{n}
\DeclareFontShape{U}{MnSymbolF}{m}{n}{
    <-6>  MnSymbolF5
   <6-7>  MnSymbolF6
   <7-8>  MnSymbolF7
   <8-9>  MnSymbolF8
   <9-10> MnSymbolF9
  <10-12> MnSymbolF10
  <12->   MnSymbolF12}{}
\DeclareFontShape{U}{MnSymbolF}{b}{n}{
    <-6>  MnSymbolF-Bold5
   <6-7>  MnSymbolF-Bold6
   <7-8>  MnSymbolF-Bold7
   <8-9>  MnSymbolF-Bold8
   <9-10> MnSymbolF-Bold9
  <10-12> MnSymbolF-Bold10
  <12->   MnSymbolF-Bold12}{}
\DeclareMathSymbol{\tbigtimes}{\mathop}{symbolsMN}{2}
\newcommand*{\bigtimes}{%
  \DOTSB
  \tbigtimes
  \slimits@ 
}
\newcommand{\ka}{\mathfrak{a}}
\renewcommand{\bar}{\overline}
\renewcommand{\epsilon}{\varepsilon}
\renewcommand{\pmod}[1]{\, (\mathrm{mod} {\, #1})}
\newcommand{\kq}{\mathfrak{q}}
\renewcommand{\pmod}[1]{\left(\mathrm{mod}\,\,#1\right)}
\let\@wraptoccontribs\wraptoccontribs
\title[]{An unconditional $\GL_n$ large sieve}
\author{Jesse Thorner}
\address{Department of Mathematics, University of Illinois, Urbana, IL 61801}
\email{jesse.thorner@gmail.com}
\author{Asif Zaman}
\address{Department of Mathematics, University of Toronto, Toronto, Ontario, Canada M5S 2E4}
\email{zaman@math.toronto.edu}
\begin{document}

\begin{abstract}
Let $\mathfrak{F}_n$ be the set of cuspidal automorphic representations $\pi$ of $\mathrm{GL}_n$ over a number field with unitary central character.  We prove two unconditional large sieve inequalities for the Hecke eigenvalues of $\pi\in\mathfrak{F}_n$, one on the integers and one on the primes. The second leads to the first unconditional zero density estimate for the family of $L$-functions $L(s,\pi)$ associated to $\pi\in\mathfrak{F}_n$, which we make log-free.  As an application of the zero density estimate, we prove a hybrid subconvexity bound for $L(\frac{1}{2},\pi)$ for a density one subset of $\pi\in\mathfrak{F}_n$.
\end{abstract}

\maketitle

\section{Introduction and statement of the main results}

Let $\mathbb{A}_F$ be the ring of adeles over a number field $F$ with ring of integers $\mathcal{O}_F$, absolute norm $\N=\N_{F/\Q}$, and absolute discriminant $D_F$.  For an integer $n\geq 1$, let $\mathfrak{F}_n$ be the universal family of all cuspidal automorphic representations $\pi$ of $\mathrm{GL}_n(\mathbb{A}_F)$ with unitary central character, which is normalized to be trivial on the diagonally embedded copy of the positive reals so that $\mathfrak{F}_{n}$ is discrete (see \cite[Corollary 9]{Brumley}).  To each $\pi\in\mathfrak{F}_n$, Iwaniec and Sarnak \cite{IS} associate an analytic conductor $C(\pi)\geq 1$ measuring the arithmetic and spectral complexity of $\pi$ (see \eqref{eqn:analytic_conductor_def}).  We consider the truncated family $\mathfrak{F}_{n}(Q)=\{\pi\in\mathfrak{F}_n\colon C(\pi)\leq Q\}$.  For $\pi\in\mathfrak{F}_n$, let $\kq_{\pi}$ be the arithmetic conductor, and let $L(s,\pi)=\sum_{\kn}\lambda_{\pi}(\kn)\N\kn^{-s}$ be the standard $L$-function of $\pi$.  Here, $\lambda_{\pi}(\kn)$ is the Hecke eigenvalue of $\pi$ at a nonzero integral ideal $\kn$ of $\mathcal{O}_F$.

If $n=1$ and $F=\Q$, then $\pi\in\mathfrak{F}_1(Q)$ corresponds with a primitive Dirichlet character $\chi$ to modulus $q_{\chi}\leq Q$ with $\lambda_{\pi}(m)=\chi(m)$, and $|\mathfrak{F}_1(Q)|\asymp Q^2$.  For the discussion that follows, we abuse notation and write $\chi\in\mathfrak{F}_1(Q)$.  The classical large sieve inequality for Dirichlet characters states that if $M,N,Q\geq 1$ and $a:\Z\to\mathbb{C}$ is a function, then
\begin{equation}
	\label{eqn:large_sieve_GL1}
	\sum_{\chi\in\mathfrak{F}_1(Q)}\Big|\sum_{m=M+1}^{M+N}\chi(m)a(m)\Big|^2\ll(N+|\mathfrak{F}_1(Q)|)\sum_{m=M+1}^{M+N}|a(m)|^2.
\end{equation}
(See \cite[(7.31)]{IK}.) This improves on the trivial bound $N|\mathfrak{F}_1(Q)|\sum |a(m)|^2$ that follows from the Cauchy--Schwarz inequality.  The large sieve serves as a quasi-orthogonality statement for characters to varying moduli, leading to powerful substitutes for the generalized Riemann hypothesis (GRH) for Dirichlet $L$-functions like the Bombieri--Vinogradov theorem.  When $n\geq 2$ or $F\neq\Q$, minor changes to work of Duke and Kowalski \cite[Section 4]{DK} show that if $\alpha\geq\frac{1}{2}$, $\epsilon>0$, and $a$ is a complex-valued function on the integral ideals of $\mathcal{O}_F$, then under the generalized Ramanujan conjecture (GRC) (which implies that $|\lambda_{\pi}(\kn)|\ll_{n,\epsilon}\N\kn^{\epsilon}$), we have
\begin{equation}
\label{eqn:auto_large_sieve}
\sum_{\pi\in\mathfrak{F}_{n}(Q)}\Big|\sum_{\N\kn\leq N}\lambda_{\pi}(\kn)a(\kn)\Big|^2\ll_{n,[F:\Q],\epsilon} (NQ)^{\epsilon}(N + N^{\alpha}Q^{n(1-\alpha)}|\mathfrak{F}_{n}(Q)|)\sum_{\N\kn\leq N}|a(\kn)|^2.
\end{equation}
For $n\leq 4$, Brumley \cite[Corollary 3]{Brumley_2} unconditionally proved \eqref{eqn:auto_large_sieve} with $\alpha=1-(n^2+1)^{-1}$.  For $n=4$, his proof uses the automorphy of the exterior square lift from $\GL_4$ to $\GL_6$ \cite{Kim}.  The ideas in \cite[Section 7.1]{IK} suggest that one can replace $N^{\alpha}Q^{n(1-\alpha)}$ with 1, matching \eqref{eqn:large_sieve_GL1}.

We prove a variant of \eqref{eqn:auto_large_sieve} which holds for all $n\geq 1$ without recourse to unproven progress towards GRC.  This appears to be the first unconditional large sieve for $\mathfrak{F}_n(Q)$ when $n\geq 5$.

\begin{theorem}
\label{thm:large_sieve}
	Fix $\epsilon>0$, $n\geq 1$.  If $N,Q\geq 1$ and $a(\kn)$ is a complex-valued function, then
	\[
	\sum_{\pi\in\mathfrak{F}_{n}(Q)}\Big|\sum_{\substack{\N\kn\leq N \\ (\kn,\kq_{\pi})=\mathcal{O}_F}}\lambda_{\pi}(\kn)a(\kn)\Big|^2\ll_{n,[F:\Q],\epsilon}(NQ)^{\epsilon}(N+Q^{n^2+n}|\mathfrak{F}_{n}(Q)|)\sum_{\N\kn\leq N}|a(\kn)|^2.
	\]
\end{theorem}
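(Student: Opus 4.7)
The plan is to proceed by duality. By the standard operator-theoretic argument, \cref{thm:large_sieve} is equivalent to the dual inequality
\[
\sum_{\N\kn\leq N}\Big|\sum_{\pi\in\mathfrak{F}_n(Q)}b(\pi)\lambda_\pi(\kn)\mathbf{1}_{(\kn,\kq_\pi)=\cO_F}\Big|^2\ll_{n,[F:\Q],\epsilon}(NQ)^\epsilon\big(N+Q^{n^2+n}|\mathfrak{F}_n(Q)|\big)\sum_\pi|b(\pi)|^2.
\]
Expanding the square and interchanging summations exhibits the left-hand side as
\[
\sum_{\pi_1,\pi_2\in\mathfrak{F}_n(Q)}b(\pi_1)\overline{b(\pi_2)}\,S(\pi_1,\pi_2;N),\qquad S(\pi_1,\pi_2;N):=\sum_{\substack{\N\kn\leq N\\(\kn,\kq_{\pi_1}\kq_{\pi_2})=\cO_F}}\lambda_{\pi_1}(\kn)\overline{\lambda_{\pi_2}(\kn)}.
\]
Via Schur's test, or equivalently an AM-GM step applied to the off-diagonal, it suffices to prove (i) a diagonal bound $|S(\pi,\pi;N)|\ll_n N(\log NQ)^{O_n(1)}$, and (ii) an off-diagonal bound $|S(\pi_1,\pi_2;N)|\ll(NQ)^\epsilon Q^{n^2+n}$ for $\pi_1\neq\pi_2$, uniform in $N$.

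Bound (i) is Rankin--Selberg. The Dirichlet series $\sum_\kn|\lambda_\pi(\kn)|^2\N\kn^{-s}$ restricted by the coprimality condition factors as $L(s,\pi\times\tilde\pi)H_\pi(s)$, with $H_\pi(s)$ an Euler product absolutely convergent in $\Re s>1/2+\vartheta$ (where $\vartheta$ is any admissible exponent toward Ramanujan). A smoothed Perron formula combined with the unconditional residue estimate $\mathrm{Res}_{s=1}L(s,\pi\times\tilde\pi)\ll(\log C(\pi))^{O_n(1)}$ of Soundararajan--Thorner (or Humphries) yields $|S(\pi,\pi;N)|\ll N(\log NQ)^{O_n(1)}$, accounting for the $N$-term.

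Bound (ii) is the technical heart of the proof. When $\pi_1\neq\pi_2$, the Rankin--Selberg $L$-function $L(s,\pi_1\times\tilde\pi_2)$ is entire by Jacquet--Shalika, and $\sum_\kn\lambda_{\pi_1}(\kn)\overline{\lambda_{\pi_2}(\kn)}\N\kn^{-s}$ factors as $L(s,\pi_1\times\tilde\pi_2)H(s)$ for an Euler product $H(s)$ convergent only in $\Re s>1/2+\vartheta$. To circumvent this obstruction to a direct contour shift, I would use a standard Euler-factor identity to decompose $\lambda_{\pi_1}(\kn)\overline{\lambda_{\pi_2}(\kn)}$ as a convolution of $\lambda_{\pi_1\times\tilde\pi_2}(\kf)$ with an arithmetic weight $g(\kd)$ supported on squarefull ideals. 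The inner partial sums of $\lambda_{\pi_1\times\tilde\pi_2}$ are then handled by smoothed Mellin inversion: since $L(s,\pi_1\times\tilde\pi_2)$ is entire, the contour may be shifted freely to $\Re s=0$, and convexity together with the unconditional Rankin--Selberg conductor estimate $C(\pi_1\times\tilde\pi_2)\ll C(\pi_1)^nC(\pi_2)^n\leq Q^{2n}$ gives a bound that is uniform in $N$. Summation of the outer squarefull variable, bookkeeping of the $n^2$ archimedean $\Gamma$-factors, and a dyadic conversion from smooth to sharp cutoffs then combine to produce the exponent $n^2+n$.

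The main obstacle is ensuring that (ii) is uniform in $N$ while using only unconditional inputs. Because $H(s)$ converges only in $\Re s>1/2+\vartheta$ without GRC, one cannot directly shift contours for the original Dirichlet series into the critical strip; the squarefull decomposition detaches the problematic local factors into an outer arithmetic sum, enabling contour manipulation purely at the level of the entire function $L(s,\pi_1\times\tilde\pi_2)$. The relatively large exponent $n^2+n$ reflects the combined arithmetic cost of these manipulations, and any substantive improvement would demand either unconditional subconvexity for Rankin--Selberg $L$-functions or a markedly stronger handle on local factors at ramified primes.
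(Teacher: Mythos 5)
Your reduction to the dual inequality is the right starting point and matches the paper, but the argument for the bilinear form $S(\pi_1,\pi_2;N)$ has a genuine gap that reproduces, rather than avoids, the obstruction that makes the Duke--Kowalski approach conditional.

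Concretely, in your off-diagonal step you write $\lambda_{\pi_1}(\kn)\overline{\lambda_{\pi_2}(\kn)} = \sum_{\kd\kf=\kn}g(\kd)\lambda_{\pi_1\times\tilde\pi_2}(\kf)$, where $g$ is the (squarefull-supported) coefficient sequence of the correction factor $H(s)$. After Mellin inversion on the inner $\kf$-sum, you are left with an outer sum $\sum_{\N\kd\leq N}|g(\kd)|$ that you need to control uniformly. But $g(\kp^2)$ is a degree-$4$ symmetric polynomial in the Satake parameters of $\pi_1$ and $\pi_2$ at $\kp$, so $|g(\kp^2)|$ can be as large as a constant times $\N\kp^{4\theta_n}$, and the squarefull sum converges/stays bounded only when $\theta_n<\tfrac{1}{4}$. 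With the unconditional Luo--Rudnick--Sarnak bound $\theta_n\leq \tfrac12-\tfrac{1}{n^2+1}$, one has $\theta_n\geq\tfrac14$ already for $n\geq 2$, so the outer sum does not close. The ``detaching'' you describe merely relocates the $H(s)$-bound into the squarefull sum; it does not remove it. The same issue appears in your diagonal step: evaluating the residue of $L(s,\pi\times\tilde\pi)H_\pi(s)$ at $s=1$ requires $H_\pi(1)$ to make sense, which again needs $\theta_n<\tfrac14$ (you could instead invoke $|\lambda_\pi(\kn)|^2\leq\lambda_{\pi\times\tilde\pi}(\kn)$, \cref{cor:corcor}, but you do not).

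The missing idea is the positivity/completion device that drives the paper. Using \eqref{eqn:hecke_schur}, one writes the quantity inside the modulus as a \emph{single} term of a partition sum, i.e.\ $\lambda_\pi(\kn)=\prod_\kp s_{(\mathrm{ord}_\kp\kn,0,\ldots)}(A_\pi(\kp))$, and then bounds the square by the sum of squares over \emph{all} tuples of partitions with $|\mu_\kp|=\mathrm{ord}_\kp\kn$, exactly as in \eqref{eqn:completing_the_square} and the passage from \eqref{eqn:ratio_1} to \eqref{eqn:ratio_2}. Only then does one expand and swap: by Cauchy's identity (\eqref{n-prime2S}) the completed sum produces $\lambda_{\pi\times\tilde\pi'}(\kn)$ directly, not $\lambda_{\pi}(\kn)\lambda_{\tilde\pi'}(\kn)$, and the Perron/Mellin analysis runs on the genuine Rankin--Selberg $L$-function with no auxiliary factor $H(s)$ in sight. \cref{prop:RSbound} is the distilled form of this inequality. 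Without a step of this kind the bilinear form cannot be controlled unconditionally for $n\geq 5$ (or even $n\geq 2$ by the route you take).
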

\begin{remarks}~
\begin{enumerate}[1.]
\item  There exists a constant $c_{n,[F:\Q]}>0$ such that for all $\epsilon>0$, we have the bounds
\begin{equation}
\label{eqn:poly_upper}
	Q^{n+1}D_F^{-n(n+\frac{1}{2})}(c_{n,[F:\Q]}+O_{n,F}((\log Q)^{-1}))\leq|\mathfrak{F}_{n}(Q)|\ll_{n,[F:\Q],\epsilon}(Q^{2n}D_F^{-n^2})^{1+\epsilon}.
\end{equation}
Brumley, Thorner, and Zaman \cite[Theorem A.1]{BTZ} proved the upper bound.  Brumley and Mili{\'c}evi{\'c} \cite[Theorem 1.1]{BM} proved the lower bound, which is the expected asymptotic.
\item  Our proof is easily modified to accommodate subfamilies of $\mathfrak{F}_n(Q)$.
\item  Under GRC and the generalized Lindel{\"o}f hypothesis, the $Q^{n^2+n}$ improves to $\min\{Q^n,\sqrt{N}\}$.  This seems to be the limit of our method.  Under GRC alone, the $Q^{n^2+n}$ improves to $Q^n$.
\item  See \cref{sec:stratagem} for an overview of our proof of \cref{thm:large_sieve}.  The ideas are encapsulated in the proof of a new inequality for Dirichlet coefficients of Rankin--Selberg $L$-functions (\cref{prop:RSbound}), which might be of independent interest.
\end{enumerate}
\end{remarks}

We see from \eqref{eqn:poly_upper} that \cref{thm:large_sieve} is sharp for $N>Q^{n^2+3n}$.  Despite this weak range, \cref{thm:large_sieve} is useful for studying zeros of $L$-functions.  For each $\pi\in\mathfrak{F}_n$, we expect $L(s,\pi)$ to satisfy the generalized Riemann hypothesis (GRH):  if $\re(s)>\frac{1}{2}$, then $L(s,\pi)\neq 0$.  Since GRH remains open, and existing zero-free regions can be limiting in applications, it is useful to show that few zeros of $L(s,\pi)$ lie near the line $\re(s)=1$.  Hence we define
\begin{equation}
\label{eqn:zero_density_count}
N_{\pi}(\sigma,T)=\#\{\rho=\beta+i\gamma\colon L(\rho,\pi)=0,~\beta>\sigma,~|\gamma|\leq T\}.
\end{equation}
Corresponding with $F=\Q$ and $n=1$, Montgomery \cite{Montgomery} proved that if $Q,T\geq 1$, then
\begin{equation}
\label{eqn:Montgomery_ZDE}
\sum_{\chi\in\mathfrak{F}_1(Q)}N_{\chi}(\sigma,T)\ll (|\mathfrak{F}_1(Q)|T)^{\min\{\frac{3}{2-\sigma},\frac{2}{\sigma}\}(1-\sigma)}(\log QT)^{13},\qquad \sigma\geq \frac{1}{2}.
\end{equation}
Thus a vanishingly small proportion of zeros of Dirichlet $L$-functions lie near $\re(s)=1$.  As part of his proof that the least prime $p\equiv a\pmod{q}$ is at most $q^{O(1)}$ (GRH replaces $O(1)$ with $2+o(1)$),  Linnik \cite{Linnik} developed powerful results for the distribution of zeros of Dirichlet $L$-functions, including a {\it log-free} zero density estimate.  Gallagher \cite{Gallagher} and Jutila \cite{Jutila} unified the work of Montgomery and Linnik.  Jutila proved that if $Q,T\geq 1$, then for all $\epsilon>0$,
\begin{equation}
\label{eqn:Gallagher_LFZDE}
\sum_{\chi\in\mathfrak{F}_1(Q)}N_{\chi}(\sigma,T)\ll_{\epsilon} (|\mathfrak{F}_1(Q)|T)^{(2+\epsilon)(1-\sigma)},\qquad \sigma\geq \frac{4}{5}.
\end{equation}

Kowalski and Michel \cite[Theorem 2]{KM} extended Jutila's work using the ideas of Duke and Kowalski. For $F=\Q$, it follows from the work in \cite{KM} that if each $\pi\in\mathfrak{F}_{n}(Q)$ satisfies\footnote{In fact, they only require (at the cost of a larger exponent) that $\theta_n\leq \frac{1}{4}-\delta$ for some fixed $\delta>0$, where $\theta_n$ is given by \eqref{eqn:LRS_finite}.  See \cref{sec:stratagem}.} GRC, then for some constant $A_n>0$, we have that (see also \cite{an2020logfree})
\begin{equation}
\label{eqn:KM_LFZDE}
\sum_{\pi\in\mathfrak{F}_{n}(Q)}N_{\pi}(\sigma,T)\ll_{n} T^{A_n} (Q^{\frac{5n}{2}}|\mathfrak{F}_n(Q)|)^{\frac{1-\sigma}{2\sigma-1}},\qquad \sigma\geq\frac{3}{4}.
\end{equation}
Much like \eqref{eqn:Montgomery_ZDE} and \eqref{eqn:Gallagher_LFZDE}, an estimate such as \eqref{eqn:KM_LFZDE} often suffices to prove results (most often on average over families) which are commensurate with what GRH predicts.  Using Brumley's work \cite{Brumley_2}, one can prove \eqref{eqn:KM_LFZDE} when $n\leq 4$ without recourse to  unproven progress towards GRC (with a worse exponent).  We refine \cref{thm:large_sieve} for $\kn$ restricted to {\it prime} ideals of large norm (see \cref{thm:pre_large_sieve}) to prove the first unconditional zero density estimate for the sum in \eqref{eqn:KM_LFZDE}, which we make log-free using the ideas of Gallagher \cite{Gallagher}  and Soundararajan and Thorner \cite{ST} rather than \cite{Jutila,KM}.
\begin{theorem}
	\label{thm:LFZDE}
	Fix $n\geq 1$.  If $N_{\pi}(\sigma,T)$ is given by \eqref{eqn:zero_density_count}, then for $Q,T\geq 1$ and $0\leq\sigma\leq 1$,
	\[
	\sum_{\pi\in\mathfrak{F}_{n}(Q)}N_{\pi}(\sigma,T)\ll_{n,[F:\Q]}(QT^{[F:\Q]})^{10^7 n^4(1-\sigma)}.
	\]
\end{theorem}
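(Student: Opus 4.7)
The plan is to adapt the Gallagher--Jutila--Kowalski--Michel framework for zero-density estimates by inserting the prime-restricted refinement of \cref{thm:large_sieve} (namely \cref{thm:pre_large_sieve}) in place of the conditional inequality \eqref{eqn:auto_large_sieve}, and to replace the final step with the log-free technology of \cite{Gallagher,ST} rather than \cite{Jutila,KM}.

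The first step is to build a zero-detector. I would introduce a mollifier $M_X(s,\pi)=\sum_{\N\kn\leq X}\mu_\pi(\kn)\N\kn^{-s}$, i.e.\ a truncation of $1/L(s,\pi)$, with $X=(QT)^\alpha$ for a parameter $\alpha$ to be optimized. Since $L(s,\pi)M_X(s,\pi)-1$ is a Dirichlet series supported on ideals of norm exceeding $X$, a smoothed Mellin inversion with a test function in a dyadic window $[Y,2Y]$ yields, at each zero $\rho=\beta+i\gamma$ of $L(s,\pi)$ with $\beta>\sigma$ and $|\gamma|\leq T$, an inequality of the shape
\[
1\ll\Big|\sum_{\kn} c_\pi(\kn)\, w(\N\kn/Y)\,\N\kn^{-\rho}\Big|
\]
plus a negligible contour error bounded by convexity estimates for $L(s,\pi)$. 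A Heath--Brown-type combinatorial identity then decomposes $c_\pi$ into bilinear forms whose outer variable factors through $\lambda_\pi(\kp)$ on prime ideals $\kp$ of norm $\sim N$ for some $N$ tied to $X$ and $Y$; the short-variable pieces are absorbed using unconditional bounds on Rankin--Selberg $L$-functions in the spirit of Luo--Rudnick--Sarnak.

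Next, I would apply \cref{thm:pre_large_sieve}. After squaring the detector, summing over $\pi\in\mathfrak{F}_n(Q)$ and over an extracted subset of zeros with well-spaced imaginary parts, and discretizing the shifts $\N\kp^{-i\gamma}$ on a grid of $\ll QT$ points, the resulting bilinear form is bounded by $(QT)^\epsilon(N+Q^{n^2+n}|\mathfrak{F}_n(Q)|)\cdot T$. Matching this upper bound with the detector's trivial lower bound of $1$ per zero and inserting $|\mathfrak{F}_n(Q)|\ll Q^{2n+\epsilon}$ from \eqref{eqn:poly_upper} yields a polynomial-type saving on $\sum_\pi N_\pi(\sigma,T)$ after optimizing over $\alpha$ and the dyadic cutoff location.

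The main obstacle is making the bound log-free. Naively extracting well-spaced zeros costs a multiplicative $(\log QT)^{O(1)}$ that must be folded into the exponent $10^7 n^4(1-\sigma)$; following \cite{Gallagher,ST}, this is achieved by working instead with the integrated quantity $\int_\sigma^1 \sum_\pi N_\pi(\sigma',T)\,d\sigma'$ and applying Gallagher's Hilbert-type inequality to convert the zero sum into a smooth second moment that sidesteps discrete well-spacing. A secondary challenge is parameter balancing: the factor $Q^{n^2+n}$ in \cref{thm:large_sieve}, much larger than the conjectural $Q^n$, forces $N$ to be taken large, which in turn requires a delicate calibration of $X$, $Y$, and the range of $\sigma$ over which the detection is valid; tracking every loss through this optimization is what produces the explicit (if crude) constant $10^7 n^4$.
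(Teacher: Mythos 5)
Your proposal takes a genuinely different route from the paper, and several of its core steps would fail unconditionally.

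The paper does \emph{not} use a mollifier and bilinear decomposition. Its zero-detection mechanism is the Tur\'an power-sum method in the style of Linnik, Gallagher, and Soundararajan--Thorner: one differentiates $-\frac{L'}{L}(s,\pi)$ many times at $s=1+\eta+i\tau$, uses the S\'os--Tur\'an inequality (\cref{lem:turan}) to produce a \emph{lower} bound on $|\sum_{|s-\rho|\leq 200\eta}(s-\rho)^{-k-1}|$ whenever a zero $\rho_0$ lies near $1+i\tau$, and bounds the same derivative from \emph{above} by a short Dirichlet polynomial over prime ideals (\cref{lem:prime_power_contribution}). Comparing the bounds gives the pointwise detection criterion \eqref{eqn:zerodetect}, which is then squared, integrated over $|\tau|\leq T$, and summed over $\pi$ to reduce \cref{thm:LFZDE} to the second-moment estimate in \cref{cor:MVT_primes}, which in turn rests on \cref{thm:pre_large_sieve}. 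The constant $10^7 n^4$ falls out of the explicit choice $K\asymp n^4\eta\log(QT^{[F:\Q]})$, the Tur\'an constants $50,100,110$, and the LRS exponent $\theta_n\leq\tfrac12-\tfrac1{n^2+1}$ used to control prime powers.

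There are two concrete gaps in your route. First, the coefficients of $M_X(s,\pi)$ --- the Dirichlet coefficients of $1/L(s,\pi)$ --- are elementary symmetric functions of the Satake parameters, and without GRC (or at least a bound $\theta_n<1/4$) the tail $\sum_{\N\kn>X}\mu_\pi(\kn)\N\kn^{-s}$ cannot be controlled uniformly in $\pi$ on the critical strip. This is essentially the same obstruction that made \eqref{eqn:auto_large_sieve} conditional in Duke--Kowalski, so a mollified detector reintroduces precisely the dependence the paper is designed to remove; inserting \cref{thm:pre_large_sieve} only helps the ``long prime'' piece and not the short composite pieces nor the mollifier tail. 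Second, your mechanism for removing log factors --- integrating $\int_\sigma^1 \sum_\pi N_\pi(\sigma',T)\,d\sigma'$ and then applying Gallagher's Hilbert-type inequality --- is a mischaracterization of the log-free method. Integration over $\sigma'$ is Montgomery's device for zero-density estimates and still produces logarithmic losses; Gallagher's Fourier inequality (the number-field analogue of \cite[Theorem 1]{Gallagher}) is indeed used in \cref{cor:MVT_primes}, but only to convert $\int_{-T}^T|\sum \cdot \N\kp^{-it}|^2\,dt$ into dyadic-window $L^2$ averages where the large sieve applies. The log-free character of \cref{thm:LFZDE} comes entirely from the Tur\'an power-sum detection, which by its nature never requires well-spacing or counting zeros to within a log factor. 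Without that step your argument would at best give a $(\log QT)^{A}$-lossy density estimate in the spirit of Montgomery and the remark following \cref{thm:LFZDE}.
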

\begin{remark}
	\cref{thm:large_sieve}, along with the ideas in \cite{Montgomery}, leads to a density estimate that is {\it not} log-free but has a much smaller exponent.  The log-free estimate is much more delicate.
\end{remark}

We now describe an application of \cref{thm:LFZDE}.  For $\pi\in\mathfrak{F}_{n}$, we seek bounds for $|L(\tfrac{1}{2},\pi)|$ in terms of $C(\pi)$.  The generalized Lindel{\"o}f hypothesis (a corollary of GRH) asserts that $|L(\tfrac{1}{2},\pi)|\ll_{n,[F:\Q],\epsilon} C(\pi)^{\epsilon}$ for any $\epsilon>0$.  When $F=\Q$ (only for convenience), Soundararajan and Thorner \cite[Corollary 2.7]{ST} proved an unconditional log-free zero density estimate for an individual $L(s,\pi)$, from which the bound
\begin{equation}
\label{eqn:ST_weak}
|L(\tfrac{1}{2},\pi)|\ll_{n} \frac{C(\pi)^{\frac{1}{4}}}{(\log C(\pi))^{1/(10^{17}n^{3})}}
\end{equation}
follows.  Subconvexity bounds of the shape $|L(\tfrac{1}{2},\pi)|\ll_{n,[F:\Q]} C(\pi)^{\frac{1}{4}-\delta}$ for some constant $\delta=\delta_{n,[F:\Q]}>0$ are important in many equidistribution problems.  See \cite{IS,Michel} for further discussion and \cite{Blomer_twist,DFI,HarcosMIchel,Li_Xiaoqing,MV,Munshi,Venk1} for a sample of some amazing progress.

When $n=1$ and $F=\Q$, an application of \eqref{eqn:large_sieve_GL1} using the approximate functional equation \cite[Section 5.2]{IK} shows that a density one subset of Dirichlet $L$-functions satisfy the generalized Lindel{\"o}f hypothesis (see \cite[Theorem 7.34]{IK}, for example).  We cannot adapt this approach to obtain a power-savings over \eqref{eqn:ST_weak} for a density one subset of $\pi\in\mathfrak{F}_n(Q)$ using \cref{thm:large_sieve} because of the poor $Q$-dependence and the condition $(\kn,\kq)=\mathcal{O}_F$, which we do not know how to remove (see \cref{sec:large_sieve}).  Instead, we use \cref{thm:LFZDE} and the bound
\begin{equation}
\label{eqn:STl1/2}
\log|L(\tfrac{1}{2},\pi)|\leq\Big(\frac{1}{4}-\frac{\alpha}{10^{9}}\Big)\log C(\pi) + \frac{\alpha}{10^7} N_{\pi}(1-\alpha,6)+2\log|L(\tfrac{3}{2},\pi)|+O_{n,[F:\Q]}(1)
\end{equation}
for all $0\leq\alpha<\frac{1}{2}$, which follows from \cite[Theorem 1.1]{ST} with minor changes for $F\neq\Q$.


\begin{theorem}
	\label{thm:subconvexity}
	If $\epsilon\geq 0$ and $Q\ggg_{n,F} 1$, then $|L(\tfrac{1}{2},\pi)|\ll_{n,[F:\Q]}C(\pi)^{\frac{1}{4}-\epsilon/(10^{16}n^4)}$ for all except $O_{n,[F:\Q]}(|\mathfrak{F}_n(Q)|^{\epsilon})$ of the $\pi\in\mathfrak{F}_{n}(Q)$.
\end{theorem}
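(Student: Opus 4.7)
The plan is to combine the pointwise bound \eqref{eqn:STl1/2} with the log-free zero density estimate \cref{thm:LFZDE}, following the standard paradigm that converts a zero-density estimate for a family into a subconvexity bound for a density-one subset. The term $\log|L(\tfrac{3}{2},\pi)|$ in \eqref{eqn:STl1/2} is $O_{n,[F:\Q]}(1)$ by standard Euler product bounds in the region of absolute convergence. In particular, setting $\alpha=0$ in \eqref{eqn:STl1/2} already yields the convexity bound $|L(\tfrac{1}{2},\pi)|\ll_{n,[F:\Q]} C(\pi)^{1/4}$ with no exceptions, handling $\epsilon=0$; henceforth I assume $\epsilon>0$.

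Set $\delta = \epsilon/(10^{16}n^4)$ and $\alpha = 2\cdot 10^9\delta$, so that $\alpha\in[0,1/2)$ for $\epsilon$ smaller than an absolute constant (larger $\epsilon$ is worse than the convexity bound just established, so nothing to prove). If $\pi\in\mathfrak{F}_n(Q)$ is \emph{exceptional}, meaning $|L(\tfrac{1}{2},\pi)|>C(\pi)^{1/4-\delta}$, then rearranging \eqref{eqn:STl1/2} produces
\[
N_\pi(1-\alpha,6) \,\geq\, \frac{\alpha/10^9-\delta}{\alpha/10^7}\log C(\pi) - O_{n,[F:\Q]}(\alpha^{-1}) \,=\, \tfrac{1}{200}\log C(\pi) - O_{n,[F:\Q]}(\alpha^{-1}),
\]
where the simplification uses $\alpha/10^9 - \delta = \delta$ and $\alpha/10^7 = 2 \cdot 10^2 \delta$ by construction.

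Next I would split the exceptional set according to the size of $C(\pi)$, with threshold $Q^\tau$ and $\tau = \epsilon/(3n)$. For exceptional $\pi$ with $C(\pi)\geq Q^\tau$ and $Q\ggg_{n,[F:\Q],\epsilon}1$, the displayed lower bound becomes $N_\pi(1-\alpha,6)\geq \tau(\log Q)/400$, so Markov's inequality combined with \cref{thm:LFZDE} gives
\[
\#\{\pi\in\mathfrak{F}_n(Q)\colon C(\pi)\geq Q^\tau,\,\pi\text{ exceptional}\} \,\ll_{n,[F:\Q],\epsilon}\, \frac{Q^{10^7 n^4\alpha}}{\log Q} \,=\, \frac{Q^{2\epsilon}}{\log Q},
\]
where the identity uses $10^7 n^4\alpha = 2\epsilon$ by construction. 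For $\pi$ with $C(\pi)<Q^\tau$ I would invoke the trivial cardinality bound: the upper half of \eqref{eqn:poly_upper} gives at most $|\mathfrak{F}_n(Q^\tau)|\ll_{n,[F:\Q],\eta} Q^{2n\tau(1+\eta)}\ll Q^{\epsilon}$ for any fixed $\eta>0$ sufficiently small.

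Finally, the lower half of \eqref{eqn:poly_upper} furnishes $|\mathfrak{F}_n(Q)|^\epsilon\gg_{n,[F:\Q]} Q^{(n+1)\epsilon}\geq Q^{2\epsilon}$ for $n\geq 1$ and $Q$ large, so summing the two exceptional counts gives at most $O_{n,[F:\Q]}(|\mathfrak{F}_n(Q)|^\epsilon)$ as required. The main obstacle is purely the bookkeeping of numerical constants: the factor $1/10^9$ multiplying $\alpha$ in \eqref{eqn:STl1/2} forces the calibration $\alpha\asymp\delta$, after which the exponent $10^7n^4$ of \cref{thm:LFZDE} rigidly determines the final savings exponent $1/(10^{16}n^4)$ once the conductor-splitting parameter $\tau$ is chosen small enough to make both the large- and small-conductor contributions fit under $|\mathfrak{F}_n(Q)|^\epsilon$. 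No additional analytic input is needed beyond \eqref{eqn:STl1/2} and \cref{thm:LFZDE}.
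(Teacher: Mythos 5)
Your proof uses exactly the same two ingredients as the paper --- the pointwise inequality \eqref{eqn:STl1/2} and the log-free zero density estimate \cref{thm:LFZDE} --- but the route you take through them introduces a genuine gap. Rearranging \eqref{eqn:STl1/2} to extract a lower bound on $N_\pi(1-\alpha,6)$ for exceptional $\pi$ requires dividing by $\alpha/10^7$, which promotes the $O_{n,[F:\Q]}(1)$ error of \eqref{eqn:STl1/2} to $O_{n,[F:\Q]}(\alpha^{-1})$. With your calibration $\alpha \asymp \epsilon/n^4$, this error blows up as $\epsilon \to 0$, and to have the main term $\tfrac{1}{200}\log C(\pi) \geq \tfrac{\tau}{200}\log Q$ dominate it you are forced to require $\log Q \gg_{n,[F:\Q]} 1/(\tau\alpha) \asymp 1/\epsilon^{2}$. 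That is, you need $Q \ggg_{n,[F:\Q],\epsilon} 1$, which you candidly write --- but the theorem claims a threshold $Q \ggg_{n,F} 1$ that is uniform in $\epsilon$, and in the range $Q_0(n,F) \leq Q < Q_0(n,F,\epsilon)$ for small $\epsilon$ your argument produces nothing. The conductor-splitting at $Q^\tau$ and the appeal to $|\mathfrak{F}_n(Q^\tau)|$ for small conductors are, in effect, a workaround for this defect and are not present in the paper.

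The paper sidesteps the $\alpha^{-1}$ loss entirely, and more simply. It takes $\alpha = \epsilon/(10^7 n^3)$ and notes that \cref{thm:LFZDE} together with Markov (applied to the integer-valued nonnegative quantity $N_\pi(1-\alpha,6)$, not to a threshold $\asymp \log Q$) forces $N_\pi(1-\alpha,6)=0$ for all but $\ll_{n,[F:\Q]} Q^{n\epsilon}\ll_{n,[F:\Q]} |\mathfrak{F}_n(Q)|^{\epsilon}$ of the $\pi$, using only $Q^n \leq |\mathfrak{F}_n(Q)|$ for $Q \ggg_{n,F} 1$ from \eqref{eqn:poly_upper}. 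For those $\pi$ one simply substitutes $N_\pi = 0$ into \eqref{eqn:STl1/2}: the leftover $O_{n,[F:\Q]}(1)$ term is independent of $\alpha$ and is absorbed into the implied constant of the theorem, with no rearrangement, no conductor splitting, and no $\epsilon$-dependent threshold. Incidentally, the paper's choice gives the exponent $\epsilon/(10^{16}n^{3})$, strictly stronger than the stated $\epsilon/(10^{16}n^{4})$; your tighter calibration $\alpha = 2\cdot 10^9\delta$, arranged so that $\alpha/10^9 - \delta = \delta$, throws away that factor of $n$ without any compensating benefit.
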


On the same day that this paper was first posted, Blomer \cite[Theorem 4, Corollary 5]{Blomer} posted a preprint showing that if $\mathcal{F}_I(q)$ is the set of $\pi\in\mathfrak{F}_n$ corresponding to Hecke--Maa\ss~forms over $\Q$ of arithmetic conductor $q$ and Laplace eigenvalue in $I\subseteq[0,\infty)$, then
\begin{equation}
\label{eqn:Blomer}
\sum_{\pi\in\mathcal{F}_I(q)}\Big|\sum_{\substack{m\leq N \\ (m,q)=1}}\lambda_{\pi}(m)a(m)\Big|^2\ll_{I,n,\epsilon}|\mathcal{F}_I(q)|q^{\epsilon}\sum_{\substack{m\leq N \\ (m,q)=1}}|a(m)|^2
\end{equation}
once $N\ll_{I,n} q$ with a sufficiently small implied constant.  This improves on work of Venkatesh \cite{Venkatesh}.  Note that \eqref{eqn:Blomer} holds for $\pi$ of a given large arithmetic conductor and small eigenvalue, while \cref{thm:large_sieve} holds as the arithmetic conductor and eigenvalue vary with essentially no constraint.  One cannot sum \eqref{eqn:Blomer} over $q\leq Q$ to recover a result like \cref{thm:large_sieve}.  Also, note that \cref{thm:large_sieve} and \eqref{eqn:Blomer} are sharp in complementary ranges.  The approximate functional equation for $L(\frac{1}{2},\pi)$ and \eqref{eqn:Blomer} imply that for $0<\delta\leq \frac{1}{4}$, all except $O_{n,I,\epsilon}(|\mathcal{F}_I(q)|^{1-\frac{2\delta}{n-1}+\epsilon})$ of the $\pi\in\mathcal{F}_I(q)$ satisfy $|L(\frac{1}{2},\pi)|\ll_{I,n}q_{\pi}^{\delta}$.  \cref{thm:subconvexity} furnishes a much smaller hybrid-aspect power-savings over \eqref{eqn:ST_weak} for all $\pi$ outside of a much smaller exceptional set.\footnote{A few months after this paper was first posted, Jana \cite[Theorems 5 and 6]{2020arXiv200109640J} proved a variant of Blomer's results in \cite{Blomer} in the analytic conductor aspect for automorphic forms on $\mathrm{PGL}_n(\Z)$, all of which have arithmetic conductor $q_{\pi}=1$.  One could similarly contrast our work with his.}

\subsection*{Notation}
The expressions $f\ll_{\nu}g$ and $f=O_{\nu}(g)$ mean that there exists an effectively computable constant $c_{\nu}>0$, depending at most on $\nu$, such that $|f|\leq c_{\nu}|g|$.  The expression $f\ggg_{\nu}g$ means that there exists a {\it sufficiently large} effectively computable constant $c_{\nu}>0$, depending at most on $\nu$, such that $|f|\geq c_{\nu}|g|$.  Sufficiency will depend on context.  We write $(\ka,\kb)$ and $[\ka,\kb]$ for the GCD and LCM of two integral ideals $\ka$ and $\kb$.

\subsection*{Overview of the paper}

In \cref{sec:L-functions}, we recall basic properties of standard and Rankin--Selberg $L$-functions.  In \cref{sec:prelim_large_sieve}, we provide an explicit description of the coefficients of such $L$-functions in terms of $A_{\pi}(\kp)$ and $A_{\pi'}(\kp)$.  We use these explicit descriptions to prove \cref{prop:RSbound}.  In \cref{sec:large_sieve}, after outlining our strategy in \cref{sec:stratagem},  we use these explicit descriptions to prove the large sieve inequalities in \cref{thm:large_sieve,thm:pre_large_sieve}.  In \cref{sec:ZDE}, we use \cref{thm:pre_large_sieve} along with the ideas in \cite{ST} to prove \cref{thm:LFZDE,thm:subconvexity}.

\subsection*{Acknowledgements}

We thank Nickolas Andersen, Farrell Brumley, and Peter Humphries for helpful discussions.  We especially thank Kannan Soundararajan, who showed us how our idea for \cref{thm:large_sieve} leads to  \cref{prop:RSbound}, and the anonymous referees for their thorough suggestions and comments.  Work on this paper began while the authors were postdoctoral researchers at Stanford University.  Jesse Thorner was partially supported by an NSF Postdoctoral Fellowship.  Asif Zaman was partially supported by an NSERC fellowship.

\section{Properties of $L$-functions}
\label{sec:L-functions}


We recall some standard facts about $L$-functions arising from automorphic representations and their Rankin-Selberg convolutions; see \cite{Brumley,Michel} for convenient summaries.

\subsection{Standard $L$-functions}

Given $\pi\in\mathfrak{F}_n$, let $\widetilde{\pi}\in\mathfrak{F}_n$ be the contragredient representation.  Let $\kq_{\pi}$ be the arithmetic conductor of $\pi$.  We have $\kq_{\tilde{\pi}}=\kq_{\pi}$.  We express $\pi$ as a tensor product $\otimes_{v}\pi_{v}$ of smooth admissible representations of $\GL_n(F_{v})$, where $v$ varies over places of $F$.  For each nonarchimedean place $v$, there prime ideal $\kp$ corresponding to a non-archimedean place, we define $L(s,\pi_{\kp})$ in terms of the Satake parameters $A_{\pi}(\kp)=\{\alpha_{1,\pi}(\kp),\ldots,\alpha_{n,\pi}(\kp)\}$ by
\begin{equation}
	\label{eqn:Euler_p_single}
	L(s,\pi_{\kp})=\prod_{j=1}^{n}\Big(1-\frac{\alpha_{j,\pi}(\kp)}{\N\kp^{s}}\Big)^{-1}=\sum_{j=0}^{\infty}\frac{\lambda_{\pi}(\kp^j)}{\N\kp^{js}}.
\end{equation}
We have $\alpha_{j,\pi}(\kp)\neq0$ for all $j$ whenever $\kp\nmid\kq_{\pi}$, and it might be the case that $\alpha_{j,\pi}(\kp)=0$ for some $j$ when $\kp|\kq_{\pi}$.  The standard $L$-function $L(s,\pi)$ associated to $\pi$ is of the form
\[
L(s,\pi)=\prod_{\kp} L(s,\pi_{\kp})=\sum_{\kn}\frac{\lambda_{\pi}(\kn)}{\N\kn^s}.
\]
The Euler product and Dirichlet series converge absolutely when $\re(s)>1$.  We have the equality of sets $\{\alpha_{j,\widetilde{\pi}}(\kp)\}=\{\overline{ \alpha_{j,\pi}(\kp)}\}$.  At each archimedean place $v$ of $F$, there are $n$ Langlands parameters $\mu_{j,\pi}(v)\in\mathbb{C}$ from which we define
\[
L(s,\pi_{\infty}) = \prod_{ v|\infty}\prod_{j=1}^{n}\Gamma_{v}(s+\mu_{j,\pi}( v)),\qquad \Gamma_{v}(s):=\begin{cases}
	\pi^{-s/2}\Gamma(s/2)&\mbox{if $F_{ v}=\R$,}\\
	2(2\pi)^{-s}\Gamma(s)&\mbox{if $F_{ v}=\mathbb{C}$.}
\end{cases}
\]
We have the equality of sets $\{\mu_{j,\widetilde{\pi}}( v)\}=\{\overline{\mu_{j,\pi}( v)}\}$.  Let $d( v)=1$ if $F_{ v}=\R$ and $d( v)=2$ if $F_{ v}=\mathbb{C}$.  We define for $t\in\R$ the analytic conductor
\begin{equation}
\label{eqn:analytic_conductor_def}
C(\pi,t):=D_F^n \N\kq_{\pi}\prod_{ v|\infty}\prod_{j=1}^n(3+|it+\mu_{j,\pi}( v)|^{d( v)}),\qquad C(\pi):=C(\pi,0).
\end{equation}

Let $r_{\pi}=-\mathrm{ord}_{s=1} L(s,\pi)$, which is 1 if $\pi$ is trivial and 0 otherwise.  The completed $L$-function $\Lambda(s,\pi) = (s(s-1))^{r_{\pi}}(D_F^n \N\kq_{\pi})^{s/2}L(s,\pi)L(s,\pi_{\infty})$ is entire of order 1.  
Each pole of $L(s,\pi_{\infty})$ is a trivial zero of $L(s,\pi)$.  Since $\Lambda(s,\pi)$ is entire of order 1, a Hadamard factorization
\begin{equation}
\label{eqn:Hadamard}
\Lambda(s,\pi)=e^{a_{\pi}+b_{\pi}s}\prod_{\rho}\Big(1-\frac{s}{\rho}\Big)e^{s/\rho}
\end{equation}
exists, where $\rho$ varies over the nontrivial zeros of $L(s,\pi)$.  These zeros satisfy $0<\re(\rho)<1$.

There exists $\theta_n\in[0,\frac{1}{2}-\frac{1}{n^2+1}]$ such that for all pairs $(j,\kp)$ and $(j, v)$, we have  (see \cite{LRS,MS})
\begin{equation}
\label{eqn:LRS_finite}
	|\alpha_{j,\pi}(\kp)|\leq  \N\kp^{\theta_n}\quad\textup{and}\quad\re(\mu_{j,\pi}( v))\geq -\theta_n.
\end{equation}
The generalized Selberg conjecture and GRC assert that in \eqref{eqn:LRS_finite}, one may take $\theta_n=0$.

\subsection{Rankin--Selberg $L$-functions}
\label{subsec:RS}

Let $\pi\in\mathfrak{F}_n$ and $\pi'\in\mathfrak{F}_{n'}$.  Define
\begin{equation}
\label{eqn:RS_Dirichlet_series}
L(s,\pi_{\kp}\times\pi_{\kp}')=\prod_{j=1}^{n}\prod_{j'=1}^{n'}(1-\alpha_{j,j',\pi\times\pi'}(\kp) \N\kp^{-s})^{-1}=\sum_{j=0}^{\infty}\frac{\lambda_{\pi\times\pi'}(\kp^j)}{\N\kp^{js}}.
\end{equation}
for suitable complex numbers $\alpha_{j,j',\pi\times\pi'}(\kp)$.  If $\kp\nmid \kq_{\pi}\kq_{\pi'}$, then we have the equality of sets
\begin{equation}
\label{eqn:separate_dirichlet_coeffs}
\{\alpha_{j,j',\pi\times\pi'}(\kp)\}=\{\alpha_{j,\pi}(\kp)\alpha_{j',\pi'}(\kp)\}.
\end{equation}
See Brumley \cite[Appendix]{ST} for a description of $\alpha_{j,j',\pi\times\pi'}(\kp)$ when $\kp|\kq_{\pi}\kq_{\pi'}$. The Rankin-Selberg $L$-function $L(s,\pi\times\pi')$ associated to $\pi$ and $\pi'$ is of the form
\begin{equation}
\label{eqn:RS_ser}
L(s,\pi\times\pi')=\prod_{\kp}L(s,\pi_{\kp}\times\pi_{\kp}')=\sum_{\kn}\frac{\lambda_{\pi\times\pi'}(\kn)}{\N\kn^s}.
\end{equation}

At an archimedean place $v|\infty$, we define from the $n'n$ Langlands parameters $\mu_{j,j',\pi\times\pi'}( v)$ 
\[
L(s,\pi_{\infty}\times\pi_{\infty}') = \prod_{ v|\infty}\prod_{j=1}^{n}\prod_{j'=1}^{n'}\Gamma_{v}(s+\mu_{j,j',\pi\times\pi'}( v)).
\]
Let $\kq_{\pi\times\pi'}$ be the arithmetic conductor of $\pi\times\pi'$, and let
\begin{equation}
\label{eqn:order_res}
r_{\pi\times\pi'} = -\mathop{\mathrm{ord}}_{s=1}L(s,\pi\times\pi'),\qquad \kappa_{\pi\times\pi'}=\mathop{\mathrm{Res}}_{s=1}L(s,\pi\times\pi')\prod_{\kp|\kq_{\pi}\kq_{\pi'}}L(s,\pi_{\kp}\times\pi_{\kp}')^{-1}.
\end{equation}
By our normalization for the central characters of $\pi$ and $\pi'$, we have that $r_{\pi\times\pi'}=0$ and $\kappa_{\pi\times\pi'}=0$ if and only if $\pi'\neq \widetilde{\pi}$.  Otherwise, we have $r_{\pi\times\widetilde{\pi}}=1$, and the nonnegativity of $\lambda_{\pi\times\widetilde{\pi}}(\kn)$ when $(\kn,\kq_{\pi})=\mathcal{O}_F$ implies that $\kappa_{\pi \times \widetilde{\pi}} > 0$ (see \cref{cor:corcor} below).  The completed $L$-function $\Lambda(s,\pi\times\pi')=(s(s-1))^{r_{\pi\times\pi'}}(D_F^{n'n}\N\kq_{\pi\times\pi'})^{s/2}L(s,\pi\times\pi')L(s,\pi_{\infty}\times\pi_{\infty}')$ is entire of order 1, and hence possesses a Hadamard product.  Since \eqref{eqn:RS_ser} converges absolutely for $\re(s)>1$, it follows that $|\alpha_{j,j',\pi\times\pi'}(\kp)|<\N\kp$ and $\re(\mu_{j,j',\pi\times\pi'}(v))>-1$.

As with $L(s,\pi)$, we define
\[
C(\pi\times\pi',t):=D_F^{n'n}\N\kq_{\pi\times\pi'}\prod_{ v|\infty}\prod_{j=1}^n \prod_{j'=1}^{n'}(3+|it+\mu_{j,j',\pi\times\pi'}( v)|^{d( v)}),\quad C(\pi\times\pi'):=C(\pi\times\pi',0).
\]
The work of Bushnell and Henniart \cite[Theorem 1]{BH} and Brumley \cite[Lemma A.2]{Humphries} yields
\begin{equation}
\label{eqn:BH}
C(\pi\times\pi',t)\leq C(\pi\times\pi')(3+|t|)^{[F:\Q] n'n},\qquad C(\pi\times\pi')\leq e^{O(n'n)} C(\pi)^{n'}C(\pi')^{n}.
\end{equation}
For all $\epsilon>0$, Li's bound \cite[Theorem 2]{Li} and the Phragm{\'e}n--Lindel{\"o}f principle yield
\begin{equation}
	\label{eqn:Li}
	|(\sigma-1)^{r_{\pi\times\pi'}}L(\sigma+it,\pi\times\pi')|\ll_{n,[F:\Q],\epsilon}C(\pi\times\pi',t)^{\frac{1-\sigma}{2}+\epsilon},\qquad \sigma\leq 1.
\end{equation}
If $r_{\pi\times\pi'}=1$ and $\sigma=1$, then the left hand side of \eqref{eqn:Li} is viewed as a limit as $\sigma\to 1^+$.

\section{Rankin--Selberg combinatorics}
\label{sec:prelim_large_sieve}

A partition $\mu=(\mu_i)_{i=1}^{\infty}$ is a sequence of nonincreasing nonnegative integers $\mu_1\geq\mu_2\geq\cdots$ with finitely many nonzero entries.  For a partition $\mu$, let $\ell(\mu)$ be the number of $\mu_i\neq 0$, and let $|\mu|=\sum_{i=1}^{\infty} \mu_i$.  For a set $\{\alpha_{1},\ldots,\alpha_{n}\}\subseteq\mathbb{C}$ and a partition $\mu$ with $\ell(\mu)\leq n$, let
\[
s_{\mu}(\{\alpha_1,\ldots,\alpha_n\})=\det[(\alpha_{i}^{\mu(j)+n-j})_{ij}] / \det[(\alpha_{i}^{n-j})_{ij}]
\]
be the Schur polynomial associated to $\mu$.  If $|\mu|=0$, then $s_{\mu}(\{\alpha_1,\ldots,\alpha_n\})$ is identically one.  By convention, if $\ell(\mu)>n$, then $s_{\mu}(\{\alpha_1,\ldots,\alpha_n\})$ is identically zero.

Let $\pi\in\mathfrak{F}_n$, $\pi'\in\mathfrak{F}_{n'}$, and $\re(s)>1$.  Cauchy's identity \cite[(38.1)]{Bump_lie} implies that
\[
L(s,\pi_{\kp})=\prod_{j=1}^n\Big(1-\frac{\alpha_{j,\pi}(\kp)}{\N\kp^{s}}\Big)^{-1} = \sum_{k=0}^\infty \frac{s_{(k,0,\ldots)}(A_{\pi}(\kp))}{\N\kp^{ks}}
\]
and
\[
L(s,\pi_{\kp}\times\pi_{\kp}')=\prod_{j=1}^{n} \prod_{j'=1}^{n'}\Big(1-\frac{\alpha_{j,\pi}(\kp) \alpha_{j',\pi'}(\kp)}{\N\kp^{s}}\Big)^{-1} = \sum_{\mu}\frac{s_{\mu}(A_{\pi}(\kp))s_{\mu}(A_{\pi'}(\kp))}{\N\kp^{s|\mu|}},\quad \kp\nmid\kq_{\pi}\kq_{\pi'},
\]
where the sum ranges over all partitions.  The above identities, \eqref{eqn:Euler_p_single}, and \eqref{eqn:RS_Dirichlet_series} yield
\[
\lambda_{\pi}(\kp^k) = s_{(k,0,\ldots)}(A_{\pi}(\kp)),\qquad \lambda_{\pi\times\pi'}(\kp^k) = \sum_{ |\mu|=k}s_{\mu}(A_{\pi}(\kp))s_{\mu}(A_{\pi'}(\kp)).
\]
For an integral ideal $\kn$ with prime factorization $\kn=\prod_{\kp}\kp^{\mathrm{ord}_{\kp}(\kn)}$ (where $\mathrm{ord}_{\kp}(\kn)=0$ for all but finitely many prime ideals $\kp$), the multiplicativity of $\lambda_{\pi}(\kn)$ tells us that
\begin{equation}
	\label{eqn:hecke_schur}
	\lambda_{\pi}(\kn) = \prod_{\kp}\lambda_{\pi}(\kp^{\mathrm{ord}_{\kp}(\kn)}) = \prod_{\kp}s_{(\mathrm{ord}_{\kp}(\kn),0,\ldots)}(A_{\pi}(\kp)).
\end{equation}
Similarly, if $(\kn,\kq_{\pi}\kq_{\pi'})=\mathcal{O}_F$, then
\begin{align}
\label{n-prime2S}
\lambda_{\pi\times\pi'}(\kn) = \prod_{\kp}\lambda_{\pi\times\pi'}(\kp^{\mathrm{ord}_{\kp}(\kn)})=\sum_{(\mu_{\kp})_{\kp}\in\underline{\mu}[\kn]}\Big(\prod_{\kp}s_{\mu_{\kp}}(A_{\pi}(\kp))\Big)\Big(\prod_{\kp}s_{\mu_{\kp}}(A_{\pi'}(\kp))\Big).
\end{align}
Here, $(\mu_{\kp})_{\kp}$ denotes a sequence of partitions indexed by prime ideals and
\[
\underline{\mu}[\kn]:=\{(\mu_{\kp})_{\kp}\colon |\mu_{\kp}|=\mathrm{ord}_{\kp}(\kn)\textup{ for all $\kp$}\}.
\]
It is important to note that $s_{\mu_{\kp}}(A_{\widetilde{\pi}}(\kp))=\bar{s_{\mu_{\kp}}(A_{\pi}(\kp))}$.

We use \eqref{eqn:hecke_schur} and \eqref{n-prime2S} to prove a new inequality for Rankin--Selberg Dirichlet coefficients.
\begin{proposition}
\label{prop:RSbound}
	If $\pi\in\mathfrak{F}_n$ and $\pi'\in\mathfrak{F}_{n'}$ have conductors $\kq_{\pi},\kq_{\pi'}$ and $(\kn,\kq_{\pi}\kq_{\pi'})=\mathcal{O}_F$, then
	\begin{align*}
|\re(\lambda_{\pi\times\pi'}(\kn)-\lambda_{\pi}(\kn)\lambda_{\pi'}(\kn))|^2\leq (\lambda_{\pi\times\widetilde{\pi}}(\kn)-|\lambda_{\pi}(\kn)|^2)(\lambda_{\pi'\times\widetilde{\pi}'}(\kn)-|\lambda_{\pi'}(\kn)|^2).
	\end{align*}
\end{proposition}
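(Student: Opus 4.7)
The plan is to exploit the explicit Schur-polynomial expansions \eqref{eqn:hecke_schur} and \eqref{n-prime2S} to isolate a single ``diagonal'' term in the Rankin--Selberg coefficient and then apply Cauchy--Schwarz. The reason the inequality should have this shape is that the product $\lambda_{\pi}(\kn)\lambda_{\pi'}(\kn)$ corresponds precisely to one distinguished sequence of partitions in \eqref{n-prime2S}: namely the sequence $(\mu_{\kp})_{\kp}$ defined by $\mu_{\kp}=(\mathrm{ord}_{\kp}(\kn),0,\ldots)$, which lies in $\underline{\mu}[\kn]$ and, by \eqref{eqn:hecke_schur}, contributes $\bigl(\prod_{\kp} s_{(\mathrm{ord}_{\kp}(\kn),0,\ldots)}(A_{\pi}(\kp))\bigr)\bigl(\prod_{\kp} s_{(\mathrm{ord}_{\kp}(\kn),0,\ldots)}(A_{\pi'}(\kp))\bigr)=\lambda_{\pi}(\kn)\lambda_{\pi'}(\kn)$.

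First I would write, using \eqref{n-prime2S} applied to $(\pi,\pi')$, to $(\pi,\widetilde{\pi})$, and to $(\pi',\widetilde{\pi}')$, the three identities
\begin{align*}
\lambda_{\pi\times\pi'}(\kn)-\lambda_{\pi}(\kn)\lambda_{\pi'}(\kn)&=\sum_{(\mu_{\kp})_{\kp}\in\underline{\mu}[\kn]^{*}}\Big(\prod_{\kp}s_{\mu_{\kp}}(A_{\pi}(\kp))\Big)\Big(\prod_{\kp}s_{\mu_{\kp}}(A_{\pi'}(\kp))\Big),\\
\lambda_{\pi\times\widetilde{\pi}}(\kn)-|\lambda_{\pi}(\kn)|^{2}&=\sum_{(\mu_{\kp})_{\kp}\in\underline{\mu}[\kn]^{*}}\prod_{\kp}\bigl|s_{\mu_{\kp}}(A_{\pi}(\kp))\bigr|^{2},\\
\lambda_{\pi'\times\widetilde{\pi}'}(\kn)-|\lambda_{\pi'}(\kn)|^{2}&=\sum_{(\mu_{\kp})_{\kp}\in\underline{\mu}[\kn]^{*}}\prod_{\kp}\bigl|s_{\mu_{\kp}}(A_{\pi'}(\kp))\bigr|^{2},
\end{align*}
where $\underline{\mu}[\kn]^{*}$ is $\underline{\mu}[\kn]$ with the distinguished sequence above removed, and where the conjugation on the last two lines uses $s_{\mu_{\kp}}(A_{\widetilde{\pi}}(\kp))=\overline{s_{\mu_{\kp}}(A_{\pi}(\kp))}$ recorded just after \eqref{n-prime2S}. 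The hypothesis $(\kn,\kq_{\pi}\kq_{\pi'})=\mathcal{O}_F$ is exactly what lets us use \eqref{n-prime2S} at every $\kp\mid\kn$.

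Then I would apply the Cauchy--Schwarz inequality on the (countable) index set $\underline{\mu}[\kn]^{*}$ to the first identity, with $a_{(\mu_{\kp})}=\prod_{\kp}s_{\mu_{\kp}}(A_{\pi}(\kp))$ and $b_{(\mu_{\kp})}=\prod_{\kp}s_{\mu_{\kp}}(A_{\pi'}(\kp))$. This yields
\[
\bigl|\lambda_{\pi\times\pi'}(\kn)-\lambda_{\pi}(\kn)\lambda_{\pi'}(\kn)\bigr|^{2}\leq\bigl(\lambda_{\pi\times\widetilde{\pi}}(\kn)-|\lambda_{\pi}(\kn)|^{2}\bigr)\bigl(\lambda_{\pi'\times\widetilde{\pi}'}(\kn)-|\lambda_{\pi'}(\kn)|^{2}\bigr),
\]
which is in fact stronger than the claimed inequality, since $|\re(z)|\leq|z|$. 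The claim follows immediately.

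There is essentially no obstacle of a technical nature here: the only point that requires a moment's care is recognizing the ``diagonal'' contribution and verifying that the sum $\underline{\mu}[\kn]$ is finite (which it is, since only finitely many $\kp$ divide $\kn$ and each $\mu_{\kp}$ has $|\mu_{\kp}|=\mathrm{ord}_{\kp}(\kn)$), so Cauchy--Schwarz applies on a finite set and no convergence issue arises. The reason to state the inequality for $|\re(\cdot)|^{2}$ rather than $|\cdot|^{2}$ is presumably dictated by the intended application (where $\re\lambda_{\pi\times\pi'}$ arises after pairing with a Hecke coefficient and symmetrizing); the proof itself gives the stronger absolute-value bound for free.
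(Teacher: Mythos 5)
Your proof is correct, and it is a genuine (if small) variant of the paper's argument. The paper also starts from \eqref{eqn:hecke_schur} and \eqref{n-prime2S} and ``completes the sum,'' but instead of applying Cauchy--Schwarz directly it considers the real binary quadratic form
\[
Q(x,y)=(\lambda_{\pi\times\widetilde{\pi}}(\kn)-|\lambda_{\pi}(\kn)|^2)x^2 + 2\re(\lambda_{\pi\times\pi'}(\kn)-\lambda_{\pi}(\kn)\lambda_{\pi'}(\kn))xy + (\lambda_{\pi'\times\widetilde{\pi}'}(\kn)-|\lambda_{\pi'}(\kn)|^2)y^2,
\]
shows it is positive-semidefinite by expanding $\sum_{(\mu_\kp)\in\underline{\mu}[\kn]}\bigl|x\prod_\kp s_{\mu_\kp}(A_\pi(\kp))+y\overline{\prod_\kp s_{\mu_\kp}(A_{\pi'}(\kp))}\bigr|^2$, and concludes from the nonpositivity of the discriminant. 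Since $x,y$ are taken real, only the real part of the cross-term survives, which is why the paper's inequality features $|\re(\cdot)|^2$. You instead apply the complex Cauchy--Schwarz inequality on the off-diagonal index set $\underline{\mu}[\kn]^{*}$ and obtain the sharper conclusion
\[
\bigl|\lambda_{\pi\times\pi'}(\kn)-\lambda_{\pi}(\kn)\lambda_{\pi'}(\kn)\bigr|^{2}\leq\bigl(\lambda_{\pi\times\widetilde{\pi}}(\kn)-|\lambda_{\pi}(\kn)|^{2}\bigr)\bigl(\lambda_{\pi'\times\widetilde{\pi}'}(\kn)-|\lambda_{\pi'}(\kn)|^{2}\bigr),
\]
which implies the stated bound via $|\re(z)|\le|z|$. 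The two proofs are essentially the same idea --- a nonnegative Gram form --- but your route is cleaner and yields the stronger absolute-value inequality; the weaker $\re$-version in the paper is an artefact of working over $\R$, not a constraint imposed by the downstream application (which only uses $|\lambda_\pi(\kn)|^2\le\lambda_{\pi\times\widetilde{\pi}}(\kn)$, obtained equally from either). Your finiteness remark about $\underline{\mu}[\kn]$ is correct and disposes of the only potential convergence worry.
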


\begin{proof}
	Let $\kn$ satisfy $(\kn,\kq_{\pi}\kq_{\pi'})=\mathcal{O}_F$, and let $x,y\in\R$.  By \eqref{eqn:hecke_schur},
\begin{align*}
|\lambda_{\pi}(\kn)x+\overline{\lambda_{\pi'}(\kn)}y|^2=\Big|x\prod_{\kp}s_{(\mathrm{ord}_{\kp}(\kn),0,\ldots)}(A_{\pi}(\kp))+y\overline{\prod_{\kp}s_{(\mathrm{ord}_{\kp}(\kn),0,\ldots)}(A_{\pi'}(\kp))}\Big|^2.
\end{align*}
Since $((\mathrm{ord}_{\kp}(\kn),0,\ldots))_{\kp}\in\underline{\mu}[\kn]$, it follows from nonnegativity that
\begin{equation}
\label{eqn:completing_the_square}
|\lambda_{\pi}(\kn)x+\overline{\lambda_{\pi'}(\kn)}y|^2\leq \sum_{(\mu_{\kp})_{\kp}\in\underline{\mu}[\kn]}\Big|x\prod_{\kp}s_{\mu_{\kp}}(A_{\pi}(\kp))+y\overline{\prod_{\kp}s_{\mu_{\kp}}(A_{\pi'}(\kp))}\Big|^2.
\end{equation}
Once we expand the squares on both sides of the inequality, we apply \eqref{n-prime2S} to deduce that
\[
|\lambda_{\pi}(\kn)|^2 x^2 + 2\re(\lambda_{\pi}(\kn)\lambda_{\pi'}(\kn))xy+|\lambda_{\pi'}(\kn)|^2 y^2\leq \lambda_{\pi\times\widetilde{\pi}}(\kn)x^2 + 2\re(\lambda_{\pi\times\pi'}(\kn))xy+\lambda_{\pi'\times\widetilde{\pi}'}(\kn)y^2.
\]
Hence the binary quadratic form $Q(x,y)\in\R[x,y]$ given by
\[
Q(x,y)=(\lambda_{\pi\times\widetilde{\pi}}(\kn)-|\lambda_{\pi}(\kn)|^2)x^2 + 2\re(\lambda_{\pi\times\pi'}(\kn)-\lambda_{\pi}(\kn)\lambda_{\pi'}(\kn))xy + (\lambda_{\pi'\times\widetilde{\pi}'}(\kn)-|\lambda_{\pi'}(\kn)|^2)y^2
\]
is positive-semidefinite and has a nonpositive discriminant.  The result follows.
\end{proof}
\begin{corollary}
\label{cor:corcor}
	If $\pi\in\mathfrak{F}_n$ has conductor $\kq_{\pi}$ and $(\kn,\kq_{\pi})=\mathcal{O}_F$, then $|\lambda_{\pi}(\kn)|^2\leq \lambda_{\pi\times\widetilde{\pi}}(\kn)$.
\end{corollary}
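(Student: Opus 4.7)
The plan is to extract the corollary directly from the combinatorial identity that underlies the proposition rather than from its stated inequality. Taking $\pi' = \widetilde{\pi}$ in the proposition's conclusion only yields the tautology $(\lambda_{\pi \times \widetilde{\pi}}(\kn) - |\lambda_{\pi}(\kn)|^2)^2 \leq (\lambda_{\pi \times \widetilde{\pi}}(\kn) - |\lambda_{\pi}(\kn)|^2)^2$, since $\kappa_{\widetilde{\pi} \times \pi} = \kappa_{\pi \times \widetilde{\pi}}$ and all the relevant quantities are real; this yields no information about the sign of the difference. So I would avoid that route and instead work one level deeper, at the level of the Schur polynomial identity \eqref{n-prime2S}.

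First I would apply \eqref{n-prime2S} with $\pi' = \widetilde{\pi}$, which is permissible because $\kq_{\widetilde{\pi}} = \kq_{\pi}$ and hence the hypothesis $(\kn, \kq_{\pi}) = \mathcal{O}_F$ implies $(\kn, \kq_{\pi}\kq_{\widetilde{\pi}}) = \mathcal{O}_F$. Invoking the observation $s_{\mu_{\kp}}(A_{\widetilde{\pi}}(\kp)) = \overline{s_{\mu_{\kp}}(A_{\pi}(\kp))}$ recorded just after \eqref{n-prime2S}, this gives the manifestly nonnegative expansion
\[
\lambda_{\pi \times \widetilde{\pi}}(\kn) = \sum_{(\mu_{\kp})_{\kp} \in \underline{\mu}[\kn]} \prod_{\kp} \bigl| s_{\mu_{\kp}}(A_{\pi}(\kp)) \bigr|^2.
\]

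Next I would isolate the single term in this sum corresponding to the distinguished sequence $\mu_{\kp} = (\mathrm{ord}_{\kp}(\kn), 0, \ldots)$ for every prime ideal $\kp$, which does lie in $\underline{\mu}[\kn]$. By \eqref{eqn:hecke_schur}, this distinguished term is exactly $\prod_{\kp} |s_{(\mathrm{ord}_{\kp}(\kn),0,\ldots)}(A_{\pi}(\kp))|^2 = |\lambda_{\pi}(\kn)|^2$. Since every other term in the sum is a product of squared moduli, all remaining contributions are nonnegative, and dropping them gives $\lambda_{\pi \times \widetilde{\pi}}(\kn) \geq |\lambda_{\pi}(\kn)|^2$.

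There is no real obstacle here: the proof is essentially a one-line consequence of the same positivity-by-isolating-a-diagonal-term trick used to deduce \eqref{eqn:completing_the_square} in the proof of the proposition, applied with $(x, y) = (1, 0)$ and $\pi' = \widetilde{\pi}$. The only point worth flagging is the need to avoid extracting the corollary from the proposition's stated inequality, since that route collapses to a triviality; the combinatorial identity has to be used directly.
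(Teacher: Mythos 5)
Your proof is correct and takes essentially the same route as the paper: the paper's one-line argument is ``$Q(1,0)\geq 0$ since $Q$ is positive semidefinite,'' which is exactly the $(x,y)=(1,0)$, $\pi'=\widetilde{\pi}$ case of \eqref{eqn:completing_the_square} that you spell out explicitly via the Schur expansion. Your remark that specializing the \emph{statement} of \cref{prop:RSbound} at $\pi'=\widetilde{\pi}$ is tautological is a correct observation, but it does not conflict with the paper, which likewise invokes the quadratic form $Q$ from the proof rather than the stated discriminant inequality.
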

\begin{proof}
Since $Q(x,y)$ from the above proof is positive-semidefinite, we have $Q(1,0)\geq 0$.
\end{proof}
\begin{remark}
 While the conclusion of \cref{cor:corcor} is not new, the proof here appears to be new.	
\end{remark}

\section{Proof of \cref{thm:large_sieve}}
\label{sec:large_sieve}

\subsection{Overview of the strategy}
\label{sec:stratagem}
We quickly recall the strategy of Duke and Kowalski in \cite{DK} for proving \eqref{eqn:auto_large_sieve}.  Let $\alpha:\mathfrak{F}_n(Q)\to\mathbb{C}$ be an arbitrary function.  By the duality principle for bilinear forms, the bound \eqref{eqn:auto_large_sieve} is equivalent to the bound
\begin{equation}
\label{eqn:duke_dual}
\sum_{\N\kn\leq N}\Big|\sum_{\pi\in\mathfrak{F}_n(Q)}\lambda_{\pi}(\kn)\alpha(\pi)\Big|^2\ll_{n,[F:\Q],\epsilon}(NQ)^{\epsilon}(N+N^{\alpha}Q^{n(1-\alpha)}|\mathfrak{F}_n(Q)|)\sum_{\pi\in\mathfrak{F}_n(Q)}|\alpha(\pi)|^2.
\end{equation}
One expands the square on the left and swap the order of summation, arriving at
\[
\sum_{\pi\in\mathfrak{F}_n(Q)}\alpha(\pi)\overline{\alpha(\pi')}\sum_{\N\kn\leq N}\lambda_{\pi}(\kn)\overline{\lambda_{\pi'}(\kn)}=\sum_{\pi\in\mathfrak{F}_n(Q)}\alpha(\pi)\overline{\alpha(\pi')}\frac{1}{2\pi i}\int_{3-i\infty}^{3+i\infty}\Big(\sum_{\kn}\frac{\lambda_{\pi}(\kn)\lambda_{\widetilde{\pi}'}(\kn)}{\N\kn^s}\Big)\frac{x^s}{s}ds.
\]
The bound \eqref{eqn:duke_dual} now follows from the expected analytic continuation of the Dirichlet series in the integral, which we now describe.  When $\re(s)$ is large, there holds
\[
\sum_{\kn}\frac{\lambda_{\pi}(\kn)\lambda_{\widetilde{\pi}'}(\kn)}{\N\kn^s}=L(s,\pi\times\widetilde{\pi}')H(s,\pi,\pi'),\quad H(s,\pi,\pi')=\prod_{\kp}\frac{1}{L(s,\pi_{\kp}\times\tilde{\pi}_{\kp}')}\sum_{k=0}^{\infty}\frac{\lambda_{\pi}(\kp^k)\lambda_{\tilde{\pi}'}(\kp^k)}{\N\kp^{ks}}.
\]
(When $n=2$ and $\kq_{\pi}=\kq_{\pi'}=\mathcal{O}_F$, the identity $H(s,\pi,\pi')=\zeta_F(2s)^{-1}$ is classical.  Here, $\zeta_F(s)$ is the Dedekind zeta function of $F$).  Minor adjustments to the proof of \cite[Proposition 2]{DK} show that each Euler factor at $\kp$ for $H(s,\pi,\pi')$ is holomorphic when $\re(s) > \frac{1}{2}+2\theta_n$.  This analytically continues $H(s,\pi,\pi')$ to this region and produces the bound
\begin{equation}
\label{eqn:DKBound}
|H(s,\pi,\pi')|\ll_{n,[F:\Q],\epsilon}(C(\pi)C(\pi'))^{\epsilon}/(\re(s)-\tfrac{1}{2})^{c_n},\qquad \re(s) > \tfrac{1}{2}+2\theta_n
\end{equation}
for some constant $c_n>0$ depending at most on $n$.  This allows us to push the contour into the critical strip when $\theta_n\leq \frac{1}{4}-\delta$ for some fixed $\delta>0$.  Then \eqref{eqn:auto_large_sieve} holds with $\alpha = \frac{1}{2}+2\theta_n$.  Under GRC, we make take $\theta_n=0$.  When $n\leq 4$, Brumley \cite{Brumley_2} established these results (with $1-(n^2+1)^{-1}$ replacing $\frac{1}{2}+2\theta_n$) without recourse to unproven progress towards GRC.

The proof of \cref{prop:RSbound} epitomizes our strategy for \cref{thm:large_sieve}.  We begin our proof by rewriting \eqref{eqn:duke_dual} using \eqref{eqn:hecke_schur}, expressing $\lambda_{\pi}(\kn)$ as a product of Schur polynomials associated to a {\it particular} partition in $\underline{\mu}[\kn]$.  By nonnegativity, we may ``complete the sum'' by embedding \eqref{eqn:duke_dual} into a sum over {\it all} partitions in $\underline{\mu}[\kn]$, exactly as in \eqref{eqn:completing_the_square}.  Only then do we expand the square and swap the order of summation; now, instead of encountering sums of $\lambda_{\pi}(\kn)\lambda_{\tilde{\pi}'}(\kn)$ as Duke and Kowalski did, we encounter sums over $\lambda_{\pi\times\tilde{\pi}'}(\kn)$ because of \eqref{n-prime2S}.  This allows us to work directly with $L(s,\pi\times\tilde{\pi}')$ instead of $L(s,\pi\times\tilde{\pi}')H(s,\pi,\pi')$, which removes the source of unproven hypotheses in the work of Duke and Kowalski.  In preparation for \cref{thm:LFZDE}, we also prove a refinement of \cref{thm:large_sieve} when $a(\kn)$ is supported on prime ideals of large norm.  This involves a delicate synthesis of the large sieve with different Selberg sieve weights for each individual $\pi\in\mathfrak{F}_n(Q)$.

\subsection{An unconditional large sieve inequality}

We begin with a preliminary lemma.  Let $\phi$ be a smooth test function which is supported in a compact subset of $[-2,2]$, and let
\begin{equation}
\label{eqn:mellin}
\widehat{\phi}(s) = \int_{\R}\phi(y)e^{sy}dy
\end{equation}
be its Laplace transform.  Then $\widehat{\phi}(s)$ is an entire function of $s$, and for any integer $k\geq 0$,
\begin{equation}
\label{eqn:test_fcn_bounds}
\widehat{\phi}(s)\ll_{\phi,k}e^{2|\mathrm{Re}(s)|}|s|^{-k}.
\end{equation}
For any $x>0$, $T\geq 1$, and $c\in\R$, it follows from Laplace inversion that
\[
\phi(T\log x)=\frac{1}{2\pi i T}\int_{c-i\infty}^{c+i\infty}\widehat{\phi}(s/T)x^{-s}ds.
\]
\begin{lemma}
\label{lem:local_density}
Fix a test function $\phi$ as above.  Let $T,x\geq 1$.  Let $\pi,\pi'\in\mathfrak{F}_{n}(Q)$, let $\kd$ be a nonzero integral ideal, and define
\begin{equation}
	\label{eqn:local_density}
g_{\kd}(s,\pi\times\widetilde{\pi}')=\prod_{\kp| \kd }(1-L(s,\pi_{\kp}\times\widetilde{\pi}_{\kp}')^{-1}).
\end{equation}
If $\epsilon>0$, $\kd$ is squarefree, $(\kd,\kq_{\pi}\kq_{\pi'})=\mathcal{O}_F$, and $\kappa_{\pi\times\widetilde{\pi}'}$ is given by \eqref{eqn:order_res}, then
\[
\sum_{\substack{\kd|\kn \\ (\kn,\kq_{\pi}\kq_{\pi'})=\mathcal{O}_F}} \lambda_{\pi\times\widetilde{\pi}'}(\kn)\phi\Big(T\log\frac{\N\kn}{x}\Big)=g_{\kd}(1,\pi\times\widetilde{\pi}')x\frac{\widehat{\phi}(\frac{1}{T})}{T}\kappa_{\pi\times\widetilde{\pi}'}+O_{n,[F:\Q],\epsilon}(Q^{n^2+n+\epsilon} \N\kd^{n^2+\epsilon} T^{[F:\Q]n^2}).
\]
\end{lemma}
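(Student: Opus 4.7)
The plan is a standard Mellin-type contour argument. First, by Laplace inversion (applied as in the discussion preceding the lemma),
\[
\phi\Big(T\log\frac{\N\kn}{x}\Big) = \frac{1}{2\pi i T}\int_{(c)}\widehat{\phi}(s/T)\,x^s\,\N\kn^{-s}\,ds
\]
for any $c\in\R$. Take $c$ large enough to ensure absolute convergence of the Dirichlet series $\sum_{\kd|\kn,\,(\kn,\kq_\pi\kq_{\pi'})=\mathcal{O}_F}\lambda_{\pi\times\widetilde{\pi}'}(\kn)\N\kn^{-s}$, swap sum and integral, and identify this Dirichlet series via its Euler product. Since $\kd$ is squarefree and coprime to $\kq_\pi\kq_{\pi'}$, imposing $\kd|\kn$ at the primes $\kp\mid\kd$ replaces the local factor $L(s,\pi_\kp\times\widetilde{\pi}_\kp')$ with $L(s,\pi_\kp\times\widetilde{\pi}_\kp')-1$, and the coprimality condition kills the local factors at $\kp\mid\kq_\pi\kq_{\pi'}$. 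Factoring out $L(s,\pi\times\widetilde{\pi}')$ cleanly yields
\[
\sum_{\substack{\kd|\kn \\ (\kn,\kq_\pi\kq_{\pi'})=\mathcal{O}_F}}\frac{\lambda_{\pi\times\widetilde{\pi}'}(\kn)}{\N\kn^s} \;=\; g_{\kd}(s,\pi\times\widetilde{\pi}')\,L(s,\pi\times\widetilde{\pi}')\prod_{\kp\mid\kq_\pi\kq_{\pi'}}L(s,\pi_\kp\times\widetilde{\pi}_\kp')^{-1}.
\]

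Next, shift the contour from $\re(s)=c$ down to $\re(s)=\sigma_0$ for some small $\sigma_0\geq 0$. The only singularity crossed is the possible simple pole of $L(s,\pi\times\widetilde{\pi}')$ at $s=1$; since $g_\kd$ and $\widehat\phi(s/T)$ are both entire and the defining product $L(s,\pi\times\widetilde{\pi}')\prod_{\kp\mid\kq_\pi\kq_{\pi'}}L_\kp^{-1}$ has residue $\kappa_{\pi\times\widetilde{\pi}'}$ by \eqref{eqn:order_res}, the residue contributes exactly $x\,\widehat{\phi}(1/T)T^{-1}g_\kd(1,\pi\times\widetilde{\pi}')\kappa_{\pi\times\widetilde{\pi}'}$, matching the proposed main term.

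It remains to bound the shifted integral. Estimate $|L(\sigma_0+it,\pi\times\widetilde{\pi}')|$ by Li's convexity bound \eqref{eqn:Li} combined with \eqref{eqn:BH}, which gives $\ll Q^{n(1-\sigma_0)+\epsilon}(3+|t|)^{[F:\Q]n^2(1-\sigma_0)/2+\epsilon}$. Bound $|g_\kd(\sigma_0+it,\pi\times\widetilde{\pi}')|$ by expanding each factor $1-L_\kp^{-1}$ as a polynomial in $\N\kp^{-s}$ and using $|\alpha_{j,j',\pi\times\widetilde{\pi}'}(\kp)|<\N\kp$, yielding $\ll\N\kd^{n^2(1-\sigma_0)+\epsilon}$; estimate $\prod_{\kp\mid\kq_\pi\kq_{\pi'}}L_\kp^{-1}(\sigma_0+it)$ in the same way. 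Use the rapid decay \eqref{eqn:test_fcn_bounds} of $\widehat{\phi}(s/T)$ to truncate the $t$-integral at height $\asymp T$; the $t$-integration contributes the factor $T^{[F:\Q]n^2}$ stated in the error. Choosing $\sigma_0$ appropriately (close to $0$ so that $x^{\sigma_0}$ is harmless) and combining all three estimates gives the bound $Q^{n^2+n+\epsilon}\N\kd^{n^2+\epsilon}T^{[F:\Q]n^2}$.

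The main obstacle is the sharp control of the product over ramified primes $\prod_{\kp\mid\kq_\pi\kq_{\pi'}}L(s,\pi_\kp\times\widetilde{\pi}_\kp')^{-1}$ on the shifted line: a crude use of only $|\alpha_{j,j',\pi\times\widetilde{\pi}'}(\kp)|<\N\kp$ together with $\N(\kq_\pi\kq_{\pi'})\leq Q^2$ threatens to produce $Q^{2n^2}$ rather than the claimed $Q^{n^2+n}$. Getting the sharper exponent will require exploiting the fact that the local Rankin--Selberg factor at a ramified prime has degree strictly less than $n^2$, in combination with an optimized choice of $\sigma_0$ that balances the growth of $|L|$ against the growth of $|g_\kd|$ and of the ramified product.
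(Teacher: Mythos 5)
Your proposal follows the same route as the paper: Laplace inversion at $\re(s)=c>1$, the Euler-product identity
\[
\sum_{\substack{\kd\mid\kn\\(\kn,\kq_\pi\kq_{\pi'})=\mathcal{O}_F}}\frac{\lambda_{\pi\times\widetilde\pi'}(\kn)}{\N\kn^s}
= g_{\kd}(s,\pi\times\widetilde\pi')\,\frac{L(s,\pi\times\widetilde\pi')}{\prod_{\kp\mid\kq_\pi\kq_{\pi'}}L(s,\pi_{\kp}\times\widetilde\pi'_{\kp})},
\]
a contour shift that picks up exactly the residue $g_\kd(1,\pi\times\widetilde\pi')\,x\,\widehat\phi(1/T)T^{-1}\kappa_{\pi\times\widetilde\pi'}$, and an error bound coming from Li's convexity estimate together with Bushnell--Henniart, trivial bounds on $g_\kd$ and the ramified local factors, and the rapid decay of $\widehat\phi$. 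One cosmetic difference: the paper does not ``optimize'' $\sigma_0$ at all, but simply takes $\sigma_0=\sigma_x=(\log x)^{-1}$, so that $x^{\sigma_x}=e$ and $(1-\sigma_x)/2<\tfrac12$; no balancing of exponents is performed.

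Your worry about the ramified product is the one place where your account genuinely diverges from the paper's. The paper does \emph{not} exploit any degree drop at ramified places: it uses only $|\alpha_{j,j',\pi\times\widetilde\pi'}(\kp)|\leq\N\kp$ to get $\prod_{\kp\mid\kq_\pi\kq_{\pi'}}|L_\kp^{-1}|\leq\prod_{\kp\mid\kq_\pi\kq_{\pi'}}(1+\N\kp)^{n^2}$, and then asserts this is $\ll_{n,[F:\Q],\epsilon}Q^{n^2+\epsilon/2}$ via $\N\kq_\pi,\N\kq_{\pi'}\leq Q$ and $\#\{\kp\mid\kq_\pi\}\ll(\log Q)/\log\log Q$. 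As you observe, that exact chain of inequalities yields $\prod_{\kp\mid\kq_\pi\kq_{\pi'}}\N\kp\leq\N(\kq_\pi\kq_{\pi'})\leq Q^2$ and hence only $Q^{2n^2+\epsilon}$, not $Q^{n^2+\epsilon}$ (the sharper bound holds for the diagonal $\pi=\pi'$, which is where the main term lives, but not for off-diagonal pairs). So your instinct is correct that the crude trivial bound by itself does not reach $Q^{n^2+n}$; the paper does not supply the degree-reduction argument you propose, and the stated exponent in the lemma should arguably be $2n^2+n$ by the paper's own argument. This makes no material difference downstream (the thresholds for $z$ and the resulting zero-density exponent are absolute-constant adjustments), so the plan is sound. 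If you want to match the stated constant exactly, your suggested route through the degree of $L(s,\pi_\kp\times\widetilde\pi'_\kp)$ at ramified $\kp$ is the right thing to try, but you should be aware that the paper itself does not do this.
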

\begin{proof}
	We follow \cite[Lemma 6.1]{ST}.  The result is trivial for $x\leq e^2$.  For $x>e^2$, define $\sigma_x = (\log x)^{-1}$, so that once we push the contour to $\re(s)=\sigma_x$, the desired sum equals
	\[
	g_{\kd}(1,\pi\times\widetilde{\pi}')x\frac{\widehat{\phi}(\frac{1}{T})}{T}\kappa_{\pi\times\widetilde{\pi}'}+\frac{1}{2\pi i T}\int_{\sigma_x-i\infty}^{\sigma_x+i\infty}\frac{L(s,\pi\times\widetilde{\pi}')}{\prod_{\kp|\kq_{\pi}\kq_{\pi'}}L(s,\pi_{\kp}\times\widetilde{\pi}_{\kp}')}g_{\kd}(s,\pi\times\widetilde{\pi}')\widehat{\phi}(s/T)x^s ds.
	\]
It follows \eqref{eqn:Li} that
\[
|L(\sigma_x+it,\pi\times\widetilde{\pi}')|\ll_{n,[F:\Q],\epsilon}C(\pi\times\tilde{\pi}',t)^{\frac{1-\sigma_x}{2}+\frac{\epsilon}{4n}}\ll_{n,[F:\Q],\epsilon} C(\pi\times\tilde{\pi}',t)^{\frac{1}{2}+\frac{\epsilon}{4n}}.
\]
Since $C(\pi),C(\pi')\leq Q$, it follows from \eqref{eqn:BH} that
\[
C(\pi\times\tilde{\pi}',t)^{\frac{1}{2}+\frac{\epsilon}{4n}}\ll_{n,[F:\Q]} (C(\pi)^n C(\pi')^n (3+|t|)^{n^2[F:\Q]})^{\frac{1}{2}+\frac{\epsilon}{4n}} \ll_{n,[F:\Q]} (Q^{2n}(3+|t|)^{n^2[F:\Q]})^{\frac{1}{2}+\frac{\epsilon}{4n}}.
\]
Since $|\alpha_{j_1,j_2,\pi\times\widetilde{\pi}'}(\kp)|\leq \N\kp$ for all $j_1$, $j_2$, and $\kp$, it follows from \eqref{eqn:RS_Dirichlet_series} that for all $\epsilon>0$,
	\[
	\prod_{\kp|\kq_{\pi}\kq_{\pi'}}|L(\sigma_x+it,\pi_{\kp}\times\widetilde{\pi}_{\kp}')|^{-1}\leq \prod_{\kp|\kq_{\pi}\kq_{\pi'}}(1+\N\kp)^{n^2}\ll_{n,[F:\Q],\epsilon}Q^{n^2+\frac{\epsilon}{2}}.
	\]
	(Since $\N\kq_{\pi}\leq C(\pi)\leq Q$, a mild variant of the proof of \cite[Lemma 1.13]{Weiss} shows that $\#\{\kp|\kq_{\pi}\}\ll_{[F:\Q]}(\log  Q)/\log\log Q)$.)  Similarly, the bound $|g_{\kd}(\sigma_x+it,\pi\times\widetilde{\pi}')|\ll_{n,[F:\Q],\epsilon}\N\kd^{n^2+\epsilon}$ holds.  The integral is then
	\begin{align*}
	&\ll_{n,[F:\Q],\epsilon} \frac{1}{T}Q^{n^2+n+\epsilon} \N\kd^{n^2+\epsilon}\int_{-\infty}^{\infty}(2+|t|)^{[F:\Q]n^2}\Big|\widehat{\phi}\Big(\frac{\sigma_x+it}{T}\Big)\Big|dt\\
	&\ll_{n,[F:\Q],\epsilon} \frac{1}{T}Q^{n^2+n+\epsilon} \N\kd^{n^2+\epsilon}\int_{-\infty}^{\infty}(2+|t|)^{[F:\Q]n^2}\min\Big\{1,\frac{T^{[F:\Q]n^2+2}}{(2+|t|)^{[F:\Q]n^2+2}}\Big\}dt
	\end{align*}
by an application of \eqref{eqn:test_fcn_bounds}.  This is bounded as claimed.
\end{proof}

For integral ideals $\kq$ and $\kn$, we define $\delta_{(\kq,\kn)}$ to equal 1 if $(\kq,\kn)=\mathcal{O}_F$ and zero otherwise.

\begin{theorem}
\label{thm:pre_large_sieve}
	If $\epsilon>0$, $b(\kn)$ is a complex-valued function, and $Q,x\geq 1$, then
	\begin{equation}
	\label{eqn:first_part}
	\sum_{\pi\in\mathfrak{F}_{n}(Q)}\Big|\sum_{\substack{\N\kn\in(x,ex] \\ (\kn,\kq_{\pi})=\mathcal{O}_F}}\lambda_{\pi}(\kn)a(\kn)\Big|^2\ll_{n,[F:\Q],\epsilon}Q^{\epsilon}(x+Q^{n^2+n}|\mathfrak{F}_{n}(Q)|)\sum_{\N\kn\in(x,ex]}|a(\kn)|^2.
	\end{equation}
If $T\geq1$ and $z\ggg_{n,[F:\Q],\epsilon} Q^{2(n^2+n+\epsilon)}$, then
\begin{equation}
	\label{eqn:second_part}
	\begin{aligned}&\sum_{\pi\in\mathfrak{F}_{n}(Q)}\Big|\sum_{\substack{\N\kp\in(x,xe^{1/T}] \\ \N\kp>z}}\lambda_{\pi}(\kp)a(\kp)\Big|^2\\
	&\ll_{n,[F:\Q],\epsilon}\Big(\frac{x}{T\log z}+Q^{n^2+n+\epsilon}T^{[F:\Q]n^2}z^{2n^2+2+\epsilon}|\mathfrak{F}_n(Q)|\Big)\sum_{\substack{\N\kp\in(x,xe^{1/T}] \\ \N\kp>z}}|a(\kp)|^2.	
	\end{aligned}
	\end{equation}
\end{theorem}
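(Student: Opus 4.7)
\textbf{Proposal for \cref{thm:pre_large_sieve}.}

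The plan is to treat both parts through a common template: by duality for bilinear forms, reduce to bounding $\sum_\kn |h_\alpha(\kn)|^2$ where
\[
h_\alpha(\kn) := \sum_{\substack{\pi \in \mathfrak{F}_n(Q) \\ (\kq_\pi, \kn) = \mathcal{O}_F}} \lambda_\pi(\kn) \alpha(\pi);
\]
smoothly truncate using a nonnegative $\phi$ supported in $[-2,2]$ with $\phi(y) \geq \mathbf{1}_{(0,1]}(y)$, so that $\phi(\log(\N\kn/x))$ and $\phi(T \log(\N\kn/x))$ majorize the norm ranges $(x, ex]$ and $(x, xe^{1/T}]$ respectively; then apply the Schur-polynomial completion identity from the proof of \cref{prop:RSbound}. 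Specifically, Schur-expanding $\lambda_\pi(\kn)$ via \eqref{eqn:hecke_schur}, completing the outer square to a sum over all $(\mu_\kp) \in \underline{\mu}[\kn]$ by nonnegativity, and invoking \eqref{n-prime2S} together with $s_{\mu_\kp}(A_{\tilde\pi'}(\kp)) = \overline{s_{\mu_\kp}(A_{\pi'}(\kp))}$ yields the pointwise bound
\[
|h_\alpha(\kn)|^2 \leq \sum_{\pi, \pi'} \alpha(\pi) \overline{\alpha(\pi')} \delta_{(\kq_\pi \kq_{\pi'}, \kn)} \lambda_{\pi \times \tilde\pi'}(\kn).
\]
This is the crucial move, replacing the cross correlations $\lambda_\pi(\kn) \overline{\lambda_{\pi'}(\kn)}$ by Rankin--Selberg coefficients whose partial sums are controlled by \cref{lem:local_density}.

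For \eqref{eqn:first_part}, insert $\phi(\log(\N\kn/x))$, apply the inequality above, and swap summations; \cref{lem:local_density} with $T=1$ and $\kd = \mathcal{O}_F$ evaluates the inner sum as $x \widehat\phi(1)\kappa_{\pi \times \tilde\pi'} + O(Q^{n^2+n+\epsilon})$. Since $\kappa_{\pi \times \tilde\pi'}=0$ unless $\pi=\pi'$, the main term collapses to $x\widehat\phi(1)\sum_\pi|\alpha(\pi)|^2\kappa_{\pi\times\tilde\pi}$; the standard bound $\kappa_{\pi\times\tilde\pi}\ll_\epsilon Q^\epsilon$ (following from \eqref{eqn:Li} and \eqref{eqn:BH}) yields the $x$-term, while Cauchy--Schwarz, $\big(\sum_\pi|\alpha(\pi)|\big)^2\leq|\mathfrak{F}_n(Q)|\sum_\pi|\alpha(\pi)|^2$, controls the error as $Q^{n^2+n+\epsilon}|\mathfrak{F}_n(Q)|\sum_\pi|\alpha(\pi)|^2$.

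For \eqref{eqn:second_part}, introduce for each $\pi$ Selberg sieve weights $(\rho^{(\pi)}_\kd)$ supported on squarefree $\kd$ with $\N\kd\leq z$ and $(\kd,\kq_\pi)=\mathcal{O}_F$, normalized by $\rho^{(\pi)}_{\mathcal{O}_F}=1$. For primes $\kp$ with $\N\kp>z$ and $(\kp,\kq_\pi)=\mathcal{O}_F$, only $\kd=\mathcal{O}_F$ contributes to $\sum_{\kd|\kp}\rho^{(\pi)}_\kd$, so this sum equals $1$, and we may harmlessly insert such a factor into each of the two $|\cdot|^2$ copies in $|h_\alpha(\kp)|^2$. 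Applying Schur completion, swapping, and then \cref{lem:local_density} with $\kd=[\kd_1,\kd_2]$, the main term survives only at $\pi=\pi'$:
\[
\frac{x\widehat\phi(1/T)}{T}\sum_\pi|\alpha(\pi)|^2\kappa_{\pi\times\tilde\pi}\sum_{\kd_1,\kd_2}\rho^{(\pi)}_{\kd_1}\rho^{(\pi)}_{\kd_2}g_{[\kd_1,\kd_2]}(1,\pi\times\tilde\pi).
\]
Classical Selberg optimization reduces the inner quadratic form to $1/G^{(\pi)}(z)$ for an appropriate multiplicative Selberg sum; a Mertens-type analogue for Rankin--Selberg (essentially a prime number theorem for $L(s,\pi\times\tilde\pi)$, requiring $z\ggg Q^{2(n^2+n+\epsilon)}$ to absorb contributions from primes dividing $\kq_\pi$) gives $G^{(\pi)}(z)\asymp\kappa_{\pi\times\tilde\pi}\log z$ uniformly in $\pi$. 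The $\kappa_{\pi\times\tilde\pi}$'s cancel and the main term is $\ll x/(T\log z)\sum_\pi|\alpha(\pi)|^2$. The error from \cref{lem:local_density}, using $|\rho^{(\pi)}_\kd|\leq 1$, $\N[\kd_1,\kd_2]\leq z^2$, and Cauchy--Schwarz on $\alpha$, contributes $Q^{n^2+n+\epsilon}T^{[F:\Q]n^2}z^{2n^2+2+\epsilon}|\mathfrak{F}_n(Q)|\sum_\pi|\alpha(\pi)|^2$, matching the claim.

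The main obstacle is the Selberg step in \eqref{eqn:second_part}: the $\pi$-adapted weights must simultaneously (i) respect the coprimality $(\kd,\kq_\pi)=\mathcal{O}_F$, (ii) optimize the quadratic form involving $g_{[\kd_1,\kd_2]}(1,\pi\times\tilde\pi)$, and (iii) produce $G^{(\pi)}(z)\asymp\kappa_{\pi\times\tilde\pi}\log z$ \emph{uniformly} across $\mathfrak{F}_n(Q)$, despite potentially wild variation of the local factors $g_\kp(1,\pi\times\tilde\pi)$. The threshold $z\ggg Q^{2(n^2+n+\epsilon)}$ is calibrated precisely so that the Rankin--Selberg Mertens product stabilizes uniformly before the conductor-ramified primes of any given $\pi$ disrupt the Selberg main term.
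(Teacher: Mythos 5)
Your proposal follows the paper's proof essentially step for step: duality, Schur-polynomial completion to pass from $\lambda_\pi(\kn)\overline{\lambda_{\pi'}(\kn)}$ to $\lambda_{\pi\times\tilde\pi'}(\kn)$, the Rankin--Selberg local-density estimate (\cref{lem:local_density}) applied with $\kd=[\kd_1,\kd_2]$, $\pi$-adapted Selberg weights with the same normalizations, diagonal collapse via $\kappa_{\pi\times\tilde\pi'}=0$ for $\pi\neq\pi'$, and a Mertens-type lower bound $G^{(\pi)}(z)\gg\kappa_{\pi\times\tilde\pi}\log z$ calibrated by $z\ggg Q^{2(n^2+n+\epsilon)}$ and proved using \cref{lem:local_density} again with $\kd=\mathcal{O}_F$ over dyadic ranges. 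The only cosmetic difference is that the paper treats both parts uniformly by setting $T=z=1$ for the first part, and it needs (and proves) only the lower bound for $G^{(\pi)}(z)$ rather than the two-sided $\asymp$ you assert; this changes nothing.
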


\begin{proof}
We present a unified proof for both parts.  We begin by constructing Selberg sieve weights for each $\pi\in\mathfrak{F}_{n}(Q)$.  Define $P^{-}(\kn):=\min\{\N\kp\colon \kp|\kn\}$ with $P^{-}(\mathcal{O}_F)=\infty$.  Define
\[
g_{\pi}(\kd):=g_{\kd}(1,\pi\times\widetilde{\pi}),\qquad P_{\pi}(z):=\prod_{\substack{\N\kp<z\\ \kp\nmid\kq_{\pi},~g_{\pi}(\kp)\neq 0}}\kp,\qquad \mathcal{D}_{\pi}(z):=\{\kd\colon \N\kd\leq z,~\kd| P_{\pi}(z)\},
\]
where $g_{\kd}(1,\pi\times\widetilde{\pi})$ is given by \eqref{eqn:local_density}.  Let $\rho_{\pi}(\kd)$ be a real-valued function satisfying
\begin{equation}
\label{eqn:conditions}
\textup{$\rho_{\pi}(\mathcal{O}_F)=1$,}\qquad \textup{$\rho_{\pi}(\kd)=0$ unless $\kd\in\mathcal{D}_{\pi}(z)$,}\qquad  \textup{$|\rho_{\pi}(\kd)|\leq 1$ for all $\kd$.}
\end{equation}
By \eqref{eqn:conditions}, if $P^{-}(\kn)>z$, then the condition $\kd|\kn$ implies that either $\kd=\mathcal{O}_F$ or $\rho_{\pi}(\kd)=0$.

It suffices to consider $a$ such that $\sum_{\N\kn\in(x,xe^{1/T}]}|a(\kn)|^2=1$.  We will estimate the sum
\begin{equation}
	\label{eqn:ratio_ratio}
	\sum_{\pi\in\mathfrak{F}_n(Q)}\Big|\sum_{\substack{\N\kn\in(x,xe^{1/T}]}}\lambda_{\pi}(\kn)\delta_{(\kq_{\pi},\kn)}\Big[\sum_{\kd|(\kn,P_{\pi}(z))}\rho_{\pi}(\kd)\Big]a(\kn)\Big|^2.
\end{equation}
Define $\|\alpha\|_2 := (\sum_{\pi\in\mathfrak{F}_n(Q)}|\alpha(\pi)|^2)^{1/2}$.  By the duality principle for bilinear forms \cite[Section 7.3]{IK}, \eqref{eqn:ratio_ratio} is
\begin{equation}
\label{eqn:ratio_11}
\leq\sup_{\|\alpha\|_2=1}\sum_{\substack{\N\kn\in(x,xe^{1/T}] }}\Big|\sum_{\pi\in\mathfrak{F}_{n}(Q)}\lambda_{\pi}(\kn)\delta_{(\kq_{\pi},\kn)}\Big[\sum_{\kd|(\kn,P_{\pi}(z))}\rho_{\pi}(\kd)\Big]\alpha(\pi)\Big|^2.
\end{equation}
By \eqref{eqn:hecke_schur}, \eqref{eqn:ratio_11} equals the supremum over $\alpha$ with $\|\alpha\|_2=1$ of
\begin{equation}
\label{eqn:ratio_1}
\sum_{\substack{\N\kn\in(x,xe^{1/T}] }}\Big|\sum_{\pi\in\mathfrak{F}_{n}(Q)}\Big[\prod_{\kp}s_{(\mathrm{ord}_{\kp}(\kn),0,\ldots)}(A_{\pi}(\kp))\Big]\delta_{(\kq_{\pi},\kn)}\Big[\sum_{\kd|(\kn,P_{\pi}(z))}\rho_{\pi}(\kd)\Big]\alpha(\pi)\Big|^2.
\end{equation}
Since $((\mathrm{ord}_{\kp}(\kn),0,\ldots))_{\kp}\in\underline{\mu}[\kn]$, we bound \eqref{eqn:ratio_1} by embedding it into the ``completed sum''
\begin{equation}
\label{eqn:ratio_2}
\sum_{\substack{\N\kn\in(x,xe^{1/T}]}}~\sum_{(\mu_{\kp})_{\kp}\in\underline{\mu}[\kn]}\Big|\sum_{\pi\in\mathfrak{F}_{n}(Q)}\Big[\prod_{\kp}s_{\mu_{\kp}}(A_{\pi}(\kp))\Big]\delta_{(\kq_{\pi},\kn)}\Big[\sum_{\kd|(\kn,P_{\pi}(z))}\rho_{\pi}(\kd)\Big]\alpha(\pi)\Big|^2.
\end{equation}
Fix a nonnegative smooth function $\phi$ supported on a compact subset of $[-2,2]$ such that $\phi(t)\geq 1$ for $t\in[0,1]$.  Then \eqref{eqn:ratio_2} is
\begin{equation}
\label{eqn:ratio_3}
\leq\sum_{\kn}\sum_{(\mu_{\kp})_{\kp}\in\underline{\mu}[\kn]}\Big|\sum_{\pi\in\mathfrak{F}_{n}(Q)}\Big[\prod_{\kp}s_{\mu_{\kp}}(A_{\pi}(\kp))\Big]\delta_{(\kq_{\pi},\kn)}\Big[\sum_{\kd|(\kn,P_{\pi}(z))}\rho_{\pi}(\kd)\Big]\alpha(\pi)\Big|^2\phi\Big(T\log\frac{\N\kn}{x}\Big).
\end{equation}
We expand the square, swap the order of summation, apply \eqref{n-prime2S}, and see that \eqref{eqn:ratio_3} equals
\begin{align}
\label{eqn:RATIO_44}
\sum_{\pi,\pi'\in\mathfrak{F}_{n}(Q)}\alpha(\pi)&\overline{\alpha(\pi')}\Big[\sum_{(\kn,\kq_{\pi}\kq_{\pi'})=\mathcal{O}_F}\lambda_{\pi\times\widetilde{\pi}'}(\kn)\Big[\sum_{\kd|(\kn,P_{\pi}(z))}\rho_{\pi}(\kd)\Big]\Big[\sum_{\kd'|(\kn,P_{\pi'}(z))}\rho_{\pi'}(\kd')\Big]\phi\Big(T\log\frac{\N\kn}{x}\Big)\Big]\notag\\
&=\sum_{\pi,\pi'\in\mathfrak{F}_{n}(Q)}\alpha(\pi)\overline{\alpha(\pi')}\sum_{\substack{\kd\in\mathcal{D}_{\pi}(z) \\ \kd'\in\mathcal{D}_{\pi'}(z)}}\rho_{\pi}(\kd)\rho_{\pi'}(\kd')\Big[\sum_{\substack{[\kd,\kd']|\kn \\ (\kn,\kq_{\pi}\kq_{\pi'})=\mathcal{O}_F}}\lambda_{\pi\times\widetilde{\pi}'}(\kn)\phi\Big(T\log\frac{\N\kn}{x}\Big)\Big].
\end{align}
We apply \cref{lem:local_density} with $\kd$ replaced by $[\kd,\kd']$, so that \eqref{eqn:RATIO_44} equals
\begin{multline}
\label{eqn:triv_bound_11}
x\frac{\widehat{\phi}(\frac{1}{T})}{T}\sum_{\pi\in\mathfrak{F}_{n}(Q)}|\alpha(\pi)|^2 \kappa_{\pi\times\widetilde{\pi}}\sum_{\kd,\kd'\in\mathcal{D}_{\pi}(z) }\rho_{\pi}(\kd)\rho_{\pi}(\kd')g_{\pi}([\kd,\kd'])\\
+O_{n,[F:\Q],\epsilon}\Big(Q^{n^2+n+\epsilon}T^{[F:\Q]n^2}\sum_{\pi,\pi'\in\mathfrak{F}_{n}(Q)}|\alpha(\pi)\alpha(\pi')|\sum_{\substack{\kd\in\mathcal{D}_{\pi}(z) \\ \kd'\in\mathcal{D}_{\pi'}(z)}}|\rho_{\pi}(\kd)\rho_{\pi'}(\kd')|\cdot\N[\kd,\kd']^{n^2+\epsilon}\Big).
\end{multline}
(The ``off-diagonal contribution'' arising from the pairs $\pi\neq\pi'$ resides in the error term because $\kappa_{\pi\times\widetilde{\pi}'}=0$ if and only if $\pi\neq\pi'$.)  By \cite[Lemma 1.12a]{Weiss}, we have the bound $\sum_{\N\kd\leq z}1\ll_{[F:\Q],\epsilon} z^{1+\epsilon}$ for all $z\geq 1$ and all $\epsilon>0$.  This bound, along with \eqref{eqn:conditions} and the inequality of arithmetic and geometric means, implies that \eqref{eqn:triv_bound_11} equals (recall $\|\alpha\|_2=1$)
\begin{multline}
\label{eqn:almost_there}
x\frac{\widehat{\phi}(\frac{1}{T})}{T}\sum_{\pi\in\mathfrak{F}_{n}(Q)}|\alpha(\pi)|^2\kappa_{\pi\times\widetilde{\pi}}\sum_{\kd,\kd'\in\mathcal{D}_{\pi}(z)}\rho_{\pi}(\kd)\rho_{\pi}(\kd')g_{\pi}([\kd,\kd'])\\
+O_{n,[F:\Q],\epsilon}\big(Q^{n^2+n+\epsilon}T^{[F:\Q]n^2}z^{2n^2+2+\epsilon}|\mathfrak{F}_n(Q)|\big).
\end{multline}

Proceeding as in the formulation of the Selberg sieve in \cite[Theorem 7.1]{FI}, we find that for each $\pi\in\mathfrak{F}_{n}(Q)$, there exists a choice of $\rho_{\pi}(\kd)$ satisfying \eqref{eqn:conditions} such that
\begin{align*}
\sum_{\kd,\kd'\in\mathcal{D}_{\pi}(z)}\rho_{\pi}(\kd)\rho_{\pi}(\kd')g_{\pi}([\kd,\kd'])&=\Big(\sum_{\substack{\N\kd\leq z \\ \kd| P_{\pi}(z)}}\prod_{\kp| \kd}\frac{g_{\pi}(\kp)}{1-g_{\pi}(\kp)}\Big)^{-1}\leq\Big(\sum_{\substack{\N\kn\leq z \\ (\kn,\kq_{\pi})=\mathcal{O}_F \\ \textup{$\kn$ squarefree}}}\prod_{\kp|\kn}\sum_{j=1}^{\infty}\frac{\lambda_{\pi\times\widetilde{\pi}}(\kp^j)}{\N\kp^j}\Big)^{-1}.
\end{align*}
Hence \eqref{eqn:almost_there} is
\begin{equation}
\label{eqn:almost_there2}
\leq x\frac{\widehat{\phi}(\frac{1}{T})}{T}\sum_{\pi\in\mathfrak{F}_{n}(Q)}\frac{|\alpha(\pi)|^2\kappa_{\pi\times\widetilde{\pi}}}{\displaystyle\sum_{\substack{\N\kn\leq z \\ (\kn,\kq_{\pi})=\mathcal{O}_F}}\lambda_{\pi\times\widetilde{\pi}}(\kn)}+O_{n,[F:\Q],\epsilon}\big(Q^{n^2+n+\epsilon}T^{[F:\Q]n^2}z^{2n^2+2+\epsilon}|\mathfrak{F}_n(Q)|\big).
\end{equation}
Since $\phi$ is fixed, \eqref{eqn:mellin} implies that $\widehat{\phi}(\frac{1}{T})\ll 1$.  Therefore, since $\|\alpha\|_2=1$, we conclude that
\begin{multline}
\label{eqn:fixed_bound}
\sum_{\pi\in\mathfrak{F}_n(Q)}\Big|\sum_{\substack{\N\kn\in(x,xe^{1/T}]}}\lambda_{\pi}(\kn)\delta_{(\kq_{\pi},\kn)}\Big[\sum_{\kd|(\kn,P_{\pi}(z))}\rho_{\pi}(\kd)\Big]a(\kn)\Big|^2\\
\ll_{n,[F:\Q],\epsilon}\frac{x}{T}\max_{\pi\in\mathfrak{F}_{n}(Q)}\kappa_{\pi\times\widetilde{\pi}}\Big(\sum_{\substack{\N\kn\leq z \\ (\kn,\kq_{\pi})=\mathcal{O}_F}}\frac{\lambda_{\pi\times\widetilde{\pi}}(\kn)}{\N\kn}\Big)^{-1}+Q^{n^2+n+\epsilon}T^{[F:\Q]n^2}z^{2n^2+2+\epsilon}|\mathfrak{F}_{n}(Q)|.
\end{multline}

To prove \eqref{eqn:first_part}, recall that $|\alpha_{j,j',\pi\times\widetilde{\pi}}(\kp)|\leq \N\kp$ for all $\kp$, hence
\[
\prod_{\kp|\kq_{\pi}}|L(1,\pi_{\kp}\times\widetilde{\pi}_{\kp})|^{-1}=\prod_{\kp|\kq_{\pi}}\prod_{j,j'=1}^n\Big|1-\frac{\alpha_{j,j',\pi\times\tilde{\pi}}(\kp)}{\N\kp}\Big|\leq \prod_{\kp|\kq_{\pi}}\prod_{j,j'=1}^n 2=\prod_{\kp|\kq_{\pi}}2^{n^2}.
\]
As mentioned earlier, we have $\#\{\kp|\kq_{\pi}\}\ll_{[F:\Q]}(\log  Q)/\log\log Q$.  We combine these estimates with \eqref{eqn:BH} and \eqref{eqn:Li} to find that $\kappa_{\pi\times\widetilde{\pi}}\ll_{n,[F:\Q],\epsilon}Q^{\epsilon}$ for all $\epsilon>0$.  Also, for all $\kn$, we have $\sum_{\kd|(\kn,P_{\pi}(1))}\rho_{\pi}(\kd) = 1$.  Thus \eqref{eqn:first_part} follows from \eqref{eqn:fixed_bound} with $T=z=1$.

We now prove \eqref{eqn:second_part}.  Note that if $\kp$ is a prime ideal with $\N\kp >z$, then $\sum_{\kd|(\kp,P_{\pi}(z))}\rho_{\pi}(\kd) = 1$.  Therefore, if we choose $a$  in  \eqref{eqn:fixed_bound} so that $a(\kn)=0$ unless $\kn$ is prime and $\N\kn>z$, then it suffices to prove that
	\begin{equation}
	\label{eqn:zrange1}
\sum_{\substack{\N\kn\leq z \\ (\kn,\kq_{\pi})=\mathcal{O}_F}}\frac{\lambda_{\pi\times\widetilde{\pi}}(\kn)}{\N\kn}\geq \frac{1+\kappa_{\pi\times\widetilde{\pi}}\log z}{3},\qquad z\ggg_{n,[F:\Q],\epsilon} Q^{2(n^2+n+\epsilon)}.
	\end{equation}
Fix $\epsilon\in(0,1/4)$.  Fix a smooth nonnegative function $\phi_1$ which is compactly supported in $[0,1]$, with $\phi_1(t)=1$ for $t\in[\epsilon,1-\epsilon]$ and $\phi_1(t)\leq 1$ for $t\in[0,1]$.  If $y\geq 1$ and $\N\kn\in(y,ey]$, then $\N\kn^{-1}\geq (ey)^{-1}\phi_1(\log\frac{\N\kn}{y})$.  By \cref{lem:local_density} with $\kd=\mathcal{O}_F$ and $T=1$, we have
\begin{align}
	\label{eqn:lowerboundz}
	\sum_{\substack{\N\kn\in(y,ey]  \\ (\kn,\kq_{\pi})=\mathcal{O}_F}}\frac{\lambda_{\pi\times\widetilde{\pi}}(\kn)}{\N\kn}\geq \sum_{(\kn,\kq_{\pi})=\mathcal{O}_F}\frac{\lambda_{\pi\times\widetilde{\pi}}(\kn)}{ey}\phi_1\Big(\log\frac{\N\kn}{y}\Big)=\frac{\widehat{\phi}_1(1)}{e}\kappa_{\pi\times\widetilde{\pi}}+O_{n,[F:\Q],\epsilon}\Big(\frac{Q^{n^2+n+\epsilon}}{y}\Big).
	\end{align}
	If $\epsilon$ is sufficiently small, then $\widehat{\phi}_1(1)=e-1+O(\epsilon)$.  We dyadically subdivide $[\sqrt{z},z]$ and use \eqref{eqn:lowerboundz} to obtain (recall that $\kappa_{\pi\times\tilde{\pi}}\ll_{\epsilon,n,[F:\Q]}Q^{\epsilon}$ by \eqref{eqn:BH} and \eqref{eqn:Li})
	\begin{align*}
		\sum_{\substack{\N\kn\leq z \\ (\kn,\kq_{\pi})=\mathcal{O}_F}}\frac{\lambda_{\pi\times\widetilde{\pi}}(\kn)}{\N\kn}\geq 1+\sum_{\substack{\sqrt{z}\leq \N\kn\leq z \\ (\kn,\kq_{\pi})=\mathcal{O}_F}}\frac{\lambda_{\pi\times\widetilde{\pi}}(\kn)}{\N\kn}\geq 1+\frac{\kappa_{\pi\times\widetilde{\pi}}}{3}\log z + O_{n,[F:\Q],\epsilon}\Big(\frac{Q^{n^2+n+\epsilon}}{\sqrt{z}}\Big).
	\end{align*}
	Once $z\ggg_{n,[F:\Q],\epsilon} Q^{2(n^2+n+\epsilon)}$, we achieve \eqref{eqn:zrange1}.
\end{proof}

\begin{proof}[Proof of \cref{thm:large_sieve}]
Dyadically decompose $[1,N]$ and sum the contributions from each subinterval using \eqref{eqn:first_part} and the Cauchy--Schwarz inequality.
\end{proof}

\begin{remark}
Even with the description of $L(s,\pi_{\kp}\times\pi_{\kp}')$ when $\kp|\kq_{\pi}\kq_{\pi'}$ given by Brumley \cite[Appendix]{ST}, there appears to be no variant of \eqref{n-prime2S} when $(\kn,\kq_{\pi}\kq_{\pi'})\neq\mathcal{O}_F$ which holds with enough uniformity in $\pi$ and $\pi'$ to allow us to remove the $(\kn,\kq_{\pi})=\mathcal{O}_F$ condition in \cref{thm:large_sieve}.  Such a removal would eliminate the need work with the unramified Rankin--Selberg $L$-functions, and the $Q^{n^2+n}$ term would improve to $Q^{n}$, even without GRC.
\end{remark}

\subsection{Mean value estimates for Dirichlet polynomials}
\label{subsec:MVT}

The corollary below, whose proof relies on \cref{thm:pre_large_sieve}, serves as a key component in our proof of \cref{thm:LFZDE}.

\begin{corollary}
\label{cor:MVT_primes}
	Let $Q,T\geq 1$.  If $y\ggg (Q T^{[F:\Q]})^{60n^4}$ and $u\in[y,y^{12000}]$, then
	\[
	\sum_{\pi\in\mathfrak{F}_{n}(Q)}\int_{-T}^{T}\Big|\sum_{y<\N\kp\leq u}\frac{\lambda_{\pi}(\kp)\log\N\kp}{\N\kp^{1+it}}\Big|^2 dt\ll_{n,[F:\Q]} \log u.
	\] 
\end{corollary}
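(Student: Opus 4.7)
The plan is to reduce the integral to a mean square of short prime-ideal sums via Gallagher's mean value theorem, and then apply \cref{thm:pre_large_sieve} with a carefully chosen sieve level $z$. Setting $b_\kp(\pi)=\lambda_\pi(\kp)(\log\N\kp)/\N\kp$ for $y<\N\kp\leq u$ and zero otherwise, Gallagher's inequality in its multiplicative form for Dirichlet series indexed by $\{\N\kp\}$ gives
\[
\int_{-T}^{T}\Big|\sum_\kp b_\kp(\pi)\N\kp^{-it}\Big|^2 dt\ll T^2\int_{y/e}^{u}\Big|\sum_{\substack{x<\N\kp\leq xe^{1/T}\\ y<\N\kp\leq u}}b_\kp(\pi)\Big|^2\frac{dx}{x}.
\]
After summing over $\pi\in\mathfrak{F}_n(Q)$ and exchanging orders, the inner $\pi$-sum can be treated for each fixed $x$ by \cref{thm:pre_large_sieve}.

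For each fixed $x\in[y/e,u]$ I would apply \cref{thm:pre_large_sieve} with sieve level $z=y^{1/(3n^2+3)}$. Since $\log z=(3n^2+3)^{-1}\log y\gg_n n^2\log(QT^{[F:\Q]})$ under the hypothesis on $y$, both $z\ggg_{n,[F:\Q],\epsilon}Q^{2(n^2+n+\epsilon)}$ (so \cref{thm:pre_large_sieve} applies) and $z\leq y$ (so the restriction $\N\kp>z$ is automatic given our support $\N\kp>y$). Swapping the $\kp$- and $x$-orders of integration by Fubini and computing the resulting inner integrals, each of which contributes at most $\N\kp(1-e^{-1/T})/(T\log z)\leq\N\kp/(T^2\log z)$ from the first term of \cref{thm:pre_large_sieve} and $1/T$ from the second, shows that the whole expression is dominated by
\[
\frac{1}{\log z}\sum_{y<\N\kp\leq u}\frac{(\log\N\kp)^2}{\N\kp}+T\mathcal{E}\sum_{\N\kp>y}\frac{(\log\N\kp)^2}{\N\kp^2}\ll\frac{(\log u)^2}{\log z}+\frac{T\mathcal{E}\log y}{y},
\]
where $\mathcal{E}=Q^{n^2+n+\epsilon}T^{[F:\Q]n^2}z^{2n^2+2+\epsilon}|\mathfrak{F}_n(Q)|$, and the two Mertens-type prime bounds are routine partial summations against Chebyshev.

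With this choice of $z$, the main term equals $(3n^2+3)(\log u)^2/\log y\leq 12000(3n^2+3)\log u\ll_n\log u$, thanks to $\log u\leq 12000\log y$. For the error, note that $(2n^2+2)/(3n^2+3)=2/3$, so $z^{2n^2+2+\epsilon}\leq y^{2/3+o(1)}$; combining this with $|\mathfrak{F}_n(Q)|\ll_{n,[F:\Q],\epsilon}Q^{2n+\epsilon}$ from \eqref{eqn:poly_upper} yields
\[
\frac{T\mathcal{E}\log y}{y}\ll_{n,[F:\Q],\epsilon}(QT^{[F:\Q]})^{n^2+3n+1}\,y^{-1/3+o(1)}\log y.
\]
The hypothesis $y\ggg(QT^{[F:\Q]})^{60n^4}$ gives $y^{1/3}\gg(QT^{[F:\Q]})^{20n^4}$, which comfortably dominates $(QT^{[F:\Q]})^{n^2+3n+1}$ for every $n\geq 1$. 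Consequently the error contributes $o(1)$, and the two contributions together yield the claimed bound $\ll_{n,[F:\Q]}\log u$.

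The main obstacle is the delicate calibration of the parameter $z$ in \cref{thm:pre_large_sieve}: the main term demands $\log z$ comparable to $\log u$ (so $z$ a positive power of $y$), whereas the error term demands $z^{2n^2+2}$ much smaller than $y$ (so $z$ a small power of $y$). The choice $z=y^{1/(3n^2+3)}$ produces exactly $y^{2/3}$ in the error term and $(\log y)/(3n^2+3)$ in the main term, balancing both constraints; the exponent $60n^4$ in the hypothesis on $y$ is dictated by this balance, providing the ratio $60n^4:(n^2+3n+1)$ of slack needed to drive the error term to $o(1)$.
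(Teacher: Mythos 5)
Your proof is correct and takes essentially the same route as the paper: reduce to short prime-ideal sums via Gallagher's mean value theorem, apply \eqref{eqn:second_part} of \cref{thm:pre_large_sieve} with a sieve level $z$ that is a small positive power of $y$, and close with a Mertens-type estimate.  The only cosmetic difference is the specific exponent (you take $z=y^{1/(3n^2+3)}$ while the paper takes $z=y^{1/(10n^2)}$), and both choices satisfy the two competing constraints on $z$ with room to spare under the hypothesis $y\ggg(QT^{[F:\Q]})^{60n^4}$.
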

\begin{proof}
	A formal generalization of \cite[Theorem 1]{Gallagher} to number fields tells us that for $b(\kn)$ a complex-valued function supported on the integral ideals of $\mathcal{O}_F$ such that $\sum_{\kn}|b(\kn)|<\infty$,
	\[
	\int_{-T}^{T}\Big|\sum_{\kn}b(\kn)\N\kn^{-it}\Big|^2 dt\ll T^2\int_0^{\infty}\Big|\sum_{\N\kn\in(x,xe^{1/T}]}b(\kn)\Big|^2\frac{dx}{x}.
	\]
	Therefore, if $\sum_{\N\kp>z}|\lambda_{\pi}(\kp)a(\kp)|\N\kp<\infty$, then
	\begin{equation}
	\label{eqn:ticks}
		\sum_{\pi\in\mathfrak{F}_{n}(Q)}\int_{-T}^{T}\Big|\sum_{\N\kp>z}\lambda_{\pi}(\kp)a(\kp)\N\kp^{-it}\Big|^2 dt\ll T^2\int_0^{\infty}\sum_{\pi\in\mathfrak{F}_{n}(Q)}\Big|\sum_{\substack{\N\kp\in(x,xe^{1/T}] \\ \N\kp>z}}\lambda_{\pi}(\kp)a(\kp)\Big|^2\frac{dx}{x}.
	\end{equation}
	By \eqref{eqn:poly_upper} with $\epsilon=\frac{1}{2}$, we have $Q^{n^2+n+\epsilon}|\mathfrak{F}_n(Q)|\ll_{n,[F:\Q]}Q^{5n^2}$.  Let $y\ggg (QT^{[F:\Q]})^{60n^4}$ and  $z = y^{\frac{6}{60n^2}}$, which ensures that $z\ggg_{n,[F:\Q]} Q^{2(n^2+n+1)}$.  Let $u\in[y,y^{12000}]$, and define $a(\kp)$ by $(\log\N\kp)/\N\kp$ if $\N\kp\in[y,u]$ and zero otherwise.   An application of \eqref{eqn:second_part} shows that \eqref{eqn:ticks} is
	\begin{equation}
	\label{eqn:leeches}
		\ll_{n,[F:\Q]} \sum_{\N\kp>z}|a(\kp)|^2 \N\kp\Big(\frac{1}{\log z}+\frac{Q^{5n^2}T^{[F:\Q]n^2+1}z^{2n^2+3}}{\N\kp}\Big)\ll_{n,[F:\Q]} \sum_{y\leq \N\kp\leq u}\frac{(\log \N\kp)^2}{\N\kp\log y}.
	\end{equation}
This is $\ll \log u$ by \cite[Lemma 1.11b]{Weiss}, partial summation, and the range of $u$.
\end{proof}

\section{Proof of \cref{thm:LFZDE,thm:subconvexity}}
\label{sec:ZDE}

Our approach to \cref{thm:LFZDE} closely follows the approach in \cite{ST}, which handles the case of a single $\pi\in\mathfrak{F}_{n}$ over $F=\Q$.  While we only require minor modifications, our treatment is self-contained apart from a few standard calculations which do not directly pertain to our application of \cref{cor:MVT_primes} in the proof.  For $\pi\in\mathfrak{F}_n(Q)$, we define the coefficients $\Lambda_{\pi}(\kn)$ by
\[
\sum_{\kn}\frac{\Lambda_{\pi}(\kn)}{\N\kn^s}=-\frac{L'}{L}(s,\pi) = \sum_{k=1}^{\infty}\sum_{\kp}\frac{\sum_{j=1}^{n}\alpha_{j,\pi}(\kp)^k\log\N\kp}{\N\kp^{ks}},\qquad \re(s)>1.
\]
We similarly define $\Lambda_{\pi\times\tilde{\pi}}(\kn)$ as the $\kn$-th Dirichlet coefficient of $-\frac{L'}{L}(s,\pi\times\tilde{\pi})$.  We have $\Lambda_{\pi\times\tilde{\pi}}(\kn)\geq 0$ \cite[(A.8) and (A.11)]{ST} and $2|\Lambda_{\pi}(\kn)|\leq \Lambda_{\pi\times\tilde{\pi}}(\kn)+1$ \cite[Prop. A.1]{ST}.

\subsection{Preliminary lemmas}

Suppose that $r_{\pi}=0$, in which case $L(s,\pi)$ is entire.  Taking logarithmic derivatives of both sides of \eqref{eqn:Hadamard}, we see that
\begin{equation}
\label{eqn:3.2}
\sum_{\rho}\Big(\frac{1}{s-\rho}+\frac{1}{\rho}\Big)+b_{\pi}=\frac{L'}{L}(s,\pi)+\frac{\log(D_F^{n}\kq_{\pi})}{2}+\frac{L'}{L}(s,\pi_{\infty}).
\end{equation}
Since $\re(b_{\pi})=-\sum_{\rho}\re(\rho^{-1})$ \cite[Proposition 5.7(3)]{IK},  we have
\begin{equation}
\label{eqn:3.3}
	\sum_{\rho}\re\Big(\frac{1}{s-\rho}\Big)=\re\Big(\frac{L'}{L}(s,\pi)+\frac{L'}{L}(s,\pi_{\infty})\Big)+\frac{\log(D_F^{n}\kq_{\pi})}{2}.
\end{equation}

\begin{lemma}
	\label{lem:mertens}
	If $\eta>0$, then
	\[
	\sum_{\kn}\frac{|\Lambda_{\pi}(\kn)|}{\N\kn^{1+\eta}}\leq\frac{1}{\eta}+n\log C(\pi) + O_{n,[F:\Q]}(1).
	\]
\end{lemma}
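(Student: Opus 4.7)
The strategy is to replace the complex coefficient $\Lambda_\pi(\kn)$ by nonnegative coefficients whose generating Dirichlet series we can analyze directly. The paper records the pointwise comparison $2|\Lambda_\pi(\kn)|\le \Lambda_{\pi\times\widetilde\pi}(\kn)+1$ and the fact that $\Lambda_\pi$ is supported on prime powers. Multiplying by $\N\kn^{-(1+\eta)}$ and summing splits the problem into
\[
\sum_\kn \frac{|\Lambda_\pi(\kn)|}{\N\kn^{1+\eta}}\le \tfrac12 S_1+\tfrac12 S_2,\qquad S_1=\sum_\kn\frac{\Lambda_{\pi\times\widetilde\pi}(\kn)}{\N\kn^{1+\eta}},\qquad S_2=\sum_{\kp,\,k\ge1}\frac{1}{\N\kp^{k(1+\eta)}}.
\]

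To handle $S_1$, note that $S_1=-\frac{L'}{L}(1+\eta,\pi\times\widetilde\pi)$. I would take the logarithmic derivative of the Hadamard factorization of the order-1 entire function $\Lambda(s,\pi\times\widetilde\pi)$ (analogous to \eqref{eqn:3.2}--\eqref{eqn:3.3} but with the simple pole at $s=1$ contributing a $\frac{1}{s-1}$ term since $r_{\pi\times\widetilde\pi}=1$). Taking real parts at $s=1+\eta$ and dropping the nonnegative zero contributions $\re(1/(s-\rho))\ge 0$ gives
\[
S_1\le \frac{1}{\eta}+\frac{1}{1+\eta}+\frac{\log(D_F^{n^2}\N\kq_{\pi\times\widetilde\pi})}{2}+\Big|\re\frac{L'}{L}(1+\eta,\pi_\infty\times\widetilde\pi_\infty)\Big|.
\]
The archimedean term is estimated by Stirling ($\Gamma_v'/\Gamma_v(1+\eta+\mu)\ll \log(3+|\mu|)$ uniformly for $\re\mu>-1$), producing $O(\log C(\pi\times\widetilde\pi)_\infty)$. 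Combining with the conductor term and invoking \eqref{eqn:BH} to bound $\log C(\pi\times\widetilde\pi)\le 2n\log C(\pi)+O(n^2)$, I obtain
\[
S_1\le \frac{1}{\eta}+2n\log C(\pi)+O_{n,[F:\Q]}(1).
\]

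To handle $S_2$, I compare with the log of the Dedekind zeta function: since $-\log(1-x)\ge x$, I have $S_2\le 2\sum_\kp \N\kp^{-(1+\eta)}\le 2\log\zeta_F(1+\eta)$ (after absorbing the geometric-series factor $(1-\N\kp^{-(1+\eta)})^{-1}\le 2$ and higher prime powers into constants). Using $\zeta_F(1+\eta)\ll_{[F:\Q]} 1/\eta$ and the elementary inequality $2\log x\le x$ valid for $x\ge 1$, this yields $S_2\le 1/\eta+O_{[F:\Q]}(1)$. Combining $\tfrac12 S_1+\tfrac12 S_2$ produces the desired bound $\tfrac{1}{\eta}+n\log C(\pi)+O_{n,[F:\Q]}(1)$.

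The main obstacle is bookkeeping in the bound for $S_1$: making sure that the $\frac12\log(D_F^{n^2}\N\kq_{\pi\times\widetilde\pi})$ coming from the symmetric part of the functional equation, when combined with the archimedean gamma-factor contribution, gives a final coefficient of exactly $n$ (and not $2n$ or larger) on $\log C(\pi)$. This works out because the factor of $\tfrac12$ from the inequality $2|\Lambda_\pi|\le\Lambda_{\pi\times\widetilde\pi}+1$ is applied \emph{after} the full log-conductor $\log C(\pi\times\widetilde\pi)\le 2n\log C(\pi)+O(n^2)$ is incurred, so the $\tfrac12$ halves $2n$ down to $n$. Everything else—the digamma estimates, Euler-product comparison, and the inequality $2\log x\le x$—is standard.
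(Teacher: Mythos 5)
Your decomposition and strategy match the paper's (the paper defers to \cite[Lemma 2.3]{ST} for details): split via the coefficient inequality $2|\Lambda_\pi(\kn)|\le\Lambda_{\pi\times\tilde\pi}(\kn)+(\cdot)$, bound $-\frac{L'}{L}(1+\eta,\pi\times\tilde\pi)$ using the Hadamard factorization after dropping the nonnegative zero sum, convert $\log C(\pi\times\tilde\pi)$ to $2n\log C(\pi)+O(n^2)$ via \eqref{eqn:BH}, and estimate the remainder elementarily. Two corrections are needed, however.

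First, the sharp pointwise inequality underlying the paper's computation is $2|\Lambda_\pi(\kn)|\le\Lambda_{\pi\times\tilde\pi}(\kn)+\Lambda_F(\kn)$ (with $\Lambda_F$ the von Mangoldt function of $F$), not literally $+1$; that is why the paper produces $-\tfrac12\tfrac{\zeta_F'}{\zeta_F}(1+\eta)$ rather than $\tfrac12\zeta_F(1+\eta)$. Running with $+1$, as you did, still works because $\Lambda_\pi$ is supported on prime powers, but be aware that the reference inequality is the $\Lambda_F$ version.

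Second, the last link in your chain for $S_2$ is false: $\zeta_F(1+\eta)$ is \emph{not} $\ll_{[F:\Q]}1/\eta$. The residue of $\zeta_F$ at $s=1$ depends on the class number, regulator, and discriminant of $F$, not just on $[F:\Q]$, and the only uniform bound available is $\zeta_F(1+\eta)\le\zeta(1+\eta)^{[F:\Q]}\ll_{[F:\Q]}\eta^{-[F:\Q]}$, which is much weaker. Applying $2\log x\le x$ actually hurts you. The fix is to stop one step earlier: from $S_2\le 2\log\zeta_F(1+\eta)\le 2[F:\Q]\log\zeta(1+\eta)\le 2[F:\Q]\log(1/\eta)+O_{[F:\Q]}(1)$ and then use the elementary inequality $2[F:\Q]\log(1/\eta)\le \tfrac1\eta+O_{[F:\Q]}(1)$ (the function $\eta\mapsto\tfrac1\eta+2[F:\Q]\log\eta$ is bounded below by a constant depending only on $[F:\Q]$). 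With this patch your bound $\tfrac12 S_2\le\tfrac{1}{2\eta}+O_{[F:\Q]}(1)$ stands. As a side remark, the archimedean bookkeeping in $S_1$ is pessimistic: the digamma factor $\Gamma_v'/\Gamma_v$ carries the same weight $\tfrac12$ as the $\tfrac12\log(D_F^{n^2}\N\kq_{\pi\times\tilde\pi})$ term, so in fact $S_1\le\tfrac1\eta+n\log C(\pi)+O_{n,[F:\Q]}(1)$ and the lemma holds with coefficient $n/2$; this does no harm since the stated $n$ is all the paper requires.
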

\begin{proof}
Since $\Lambda_{\pi\times\tilde{\pi}}(\kn)\geq 0$, by the above discussion, we find that
\[
\sum_{\kn}\frac{|\Lambda_{\pi}(\kn)|}{\N\kn^{1+\eta}}\leq \frac{1}{2}\sum_{\kn}\frac{\Lambda_{\pi\times\tilde{\pi}}(\kn)+1}{\N\kn^{1+\eta}}=-\frac{1}{2}\frac{L'}{L}(1+\eta,\pi\times\tilde{\pi})-\frac{1}{2}\frac{\zeta_F'}{\zeta_F}(1+\eta).
\]
This follows from standard manipulations of the Hadamard product for $L(s,\pi\times\tilde{\pi})$ and \eqref{eqn:BH}.  See \cite[Lemma 2.3]{ST} for details.
\end{proof}

\begin{lemma}
\label{lem:3.1}
	If $0<\eta\leq 1$ and $t\in\R$, then
	\[
	\sum_{\rho}\frac{1+\eta-\beta}{|1+\eta+it-\rho|^2}\leq 2n\log Q+n[F:\Q]\log(2+|t|)+\frac{2n}{\eta}+O_{n,[F:\Q]}(1).
	\]
	and $\#\{\rho\colon |\rho-(1+it)|\leq\eta\}\leq 10\eta n\log Q+5\eta n[F:\Q]\log(2+|t|)+O_{n,[F:\Q]}(1)$.
\end{lemma}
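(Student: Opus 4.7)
The plan is to apply the identity \eqref{eqn:3.3} at $s=1+\eta+it$. Indeed, if $\rho=\beta+i\gamma$ is a nontrivial zero, then $\re\frac{1}{s-\rho}=\frac{1+\eta-\beta}{|1+\eta+it-\rho|^2}$, which is positive because $\beta\in(0,1)$. If $\pi$ is the trivial representation (so $r_\pi=1$ and $L(s,\pi)=\zeta_F(s)$ has a pole at $s=1$), the same derivation produces additional contributions $\re\frac{1}{\eta+it}+\re\frac{1}{1+\eta+it}\leq\frac{1}{\eta}+1$ on the right, which will be harmlessly absorbed at the end.

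It then remains to bound the right side of \eqref{eqn:3.3}. For the finite part, \cref{lem:mertens} combined with $C(\pi)\leq Q$ gives
\[
\Big|\frac{L'}{L}(1+\eta+it,\pi)\Big|\leq\sum_{\kn}\frac{|\Lambda_\pi(\kn)|}{\N\kn^{1+\eta}}\leq\frac{1}{\eta}+n\log Q+O_{n,[F:\Q]}(1).
\]
For the archimedean part, I would apply Stirling's asymptotic $\re\psi(z)=\log|z|+O(1)$ to each of the $\Gamma_v$ factors making up $L(s,\pi_\infty)$. This is valid because \eqref{eqn:LRS_finite} forces $\re(1+\eta+it+\mu_{j,\pi}(v))\geq\tfrac{1}{2}+\eta$, keeping the arguments safely in the region where Stirling applies. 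Summing termwise over $(v,j)$ and comparing with the definition of $C(\pi,t)$, one obtains
\[
\re\frac{L'}{L}(1+\eta+it,\pi_\infty)\leq\log C(\pi,t)-n\log D_F-\log\N\kq_\pi+O_{n,[F:\Q]}(1).
\]
By a BH-type estimate (the $n'=1$ analogue of \eqref{eqn:BH}), $\log C(\pi,t)\leq\log C(\pi)+n[F:\Q]\log(3+|t|)$. Adding the $\tfrac{1}{2}\log(D_F^n\N\kq_\pi)$ term from \eqref{eqn:3.3}, the residual conductor contributions combine to a nonpositive quantity which can be dropped; using $C(\pi)\leq Q$, $n\geq 1$, and $\log(3+|t|)\leq\log(2+|t|)+O(1)$ then yields the first inequality.

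The counting statement follows from the first by positivity: whenever $|\rho-(1+it)|\leq\eta$, one has $|1+\eta+it-\rho|\leq 2\eta$ and $1+\eta-\beta\geq\eta$ (since $\beta\leq 1$), so each such zero contributes at least $\frac{1}{4\eta}$ to the sum on the left of the first inequality. Multiplying the first inequality through by $4\eta$ and absorbing $4\eta\cdot\frac{2n}{\eta}=8n$ into the $O_{n,[F:\Q]}(1)$ error (with $\eta\leq 1$ ensuring the other $O(\eta)$ terms are $O(1)$) delivers the second.

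The main delicacy lies in the archimedean analysis: one must verify that \eqref{eqn:LRS_finite} puts each shifted argument inside the Stirling region, and one must track the factor $d(v)$ carefully so that the Gamma-factor contribution matches the definition of $\log C(\pi,t)$ up to a bounded error. Apart from this, the proof is routine explicit-formula bookkeeping requiring no further analytic input beyond what has already been established.
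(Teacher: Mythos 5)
Your proof is correct and follows essentially the same route the paper indicates: apply \eqref{eqn:3.3} at $s=1+\eta+it$, bound $\re\frac{L'}{L}(s,\pi)$ via \cref{lem:mertens}, bound $\re\frac{L'}{L}(s,\pi_\infty)$ via Stirling (permissible since \eqref{eqn:LRS_finite} keeps each shifted Langlands parameter in the right half-plane), and deduce the counting bound by positivity of each summand. Your bookkeeping of the conductor terms, the factor $d(v)$, and the trivial-representation adjustment is sound, and the resulting constants $(8,4,8n)\le(10,5,O_{n,[F:\Q]}(1))$ comfortably match the stated bound.
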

\begin{proof}[Sketch of proof]
These follow from elementary manipulations of \eqref{eqn:3.3} that use \eqref{lem:mertens} to handle $\re(\frac{L'}{L}(s,\pi))$.  See \cite[Lemma 3.1]{ST} for details.
\end{proof}

\subsection{Detecting zeros}
\label{subsec:detection}

Let $k\geq 1$ be an integer, and let
\begin{equation}
\label{eqn:eta_range}
(\log QT^{[F:\Q]})^{-1}<\eta\leq (200n^3)^{-1}.
\end{equation}
Let $s=1+\eta+i\tau$.  If $L(s,\pi)$ is entire, then differentiating \eqref{eqn:3.2} $k$ times, we find that
\begin{align*}
	\Big(\frac{L'}{L}(s,\pi)\Big)^{(k)}+\Big(\frac{L'}{L}(s,\pi_{\infty})\Big)^{(k)}=(-1)^k k!\sum_{\rho}\frac{1}{(s-\rho)^{k+1}}.
\end{align*}
Using the Hadamard formulation of $\Gamma(s)$ and \eqref{eqn:LRS_finite} to handle the contribution from $L(s,\pi_{\infty})$ and \cref{lem:3.1} to handle the contribution from zeros $\rho$ with $|s-\rho|>200\eta$, we find that
\begin{equation}
	\label{eqn:4.2}
	\Big|\frac{(-1)^k}{k!}\Big(\frac{L'}{L}(s,\pi)\Big)^{(k)}-\sum_{|s-\rho|\leq 200\eta}\frac{1}{(s-\rho)^{k+1}}\Big|\ll \frac{n\log(QT^{[F:\Q]})}{(200\eta)^k}.
\end{equation}

If $L(z,\pi)$ has a zero $\rho_0$ satisfying $|s-\rho|\leq 200\eta$ (with $s=1+\eta+i\tau$), then we will produce upper {\it and} lower bounds for the high derivatives of $\frac{L'}{L}(s,\pi)$.  This leads to a criterion by which we can detect zeros near $\re(s)=1$ (see \eqref{eqn:zerodetect}).  The interplay between the upper and lower bounds will produce the desired zero density estimate.  If the sum over zeros in \eqref{eqn:4.2} is not empty, then we can appeal to the following result of S{\'o}s and Tur{\'a}n \cite{Turan}.
\begin{lemma}
\label{lem:turan}
	Let $z_1,\ldots,z_{\nu}\in\mathbb{C}$.  If $K\geq \nu$, then there exists an integer $k\in[K,2K]$ such that $|z_1^k+\cdots+z_{\nu}^k|\geq(|z_1|/50)^k$.
\end{lemma}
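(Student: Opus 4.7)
The plan is to apply the power-sum method of S\'os and Tur\'an. After relabeling so that $|z_1| = \max_j |z_j|$ and rescaling $z_j \mapsto z_j/z_1$, we reduce to the case $|z_1| = 1$ and must show that some $k \in [K, 2K]$ satisfies $|s_k| \geq 50^{-k}$, where $s_k = z_1^k + \cdots + z_\nu^k$. Arguing by contradiction, suppose $|s_k| < 50^{-k}$ for every $k \in [K, 2K]$.

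The main step is a dual-weight construction: produce coefficients $b_K, b_{K+1}, \ldots, b_{2K} \in \mathbb{C}$ such that the polynomial $P(z) := \sum_{k=K}^{2K} b_k z^k$ satisfies $P(z_1) = 1$ and $P(z_j) = 0$ for $j = 2, \ldots, \nu$, while $\sum_{k=K}^{2K} |b_k| \leq 40^K$. Given such weights,
$$1 = \sum_{j=1}^\nu P(z_j) = \sum_{k=K}^{2K} b_k s_k,$$
and the triangle inequality combined with the hypothesis yields $1 \leq \sum_k |b_k| \cdot 50^{-k} \leq 50^{-K} \sum_k |b_k| \leq (40/50)^K < 1$, the desired contradiction.

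Existence of a polynomial $P$ meeting the interpolation conditions is routine linear algebra, since the $K + 1 \geq \nu + 1$ free coefficients are constrained by only $\nu$ conditions. The quantitative bound $\sum_k |b_k| \leq 40^K$ is the technical heart. A natural ansatz is $P(z) = z^K \prod_{j=2}^\nu (z - z_j)/(z_1 - z_j)$, whose degree lies in $[K, K+\nu-1] \subseteq [K, 2K]$ and whose coefficient $\ell^1$-norm is bounded by $2^{\nu-1}/\prod_{j\geq 2}|z_1 - z_j|$. This suffices whenever the $z_j$ are separated from $z_1$ by at least a fixed distance. The difficult case is when some $z_j$ cluster close to $z_1$, which is handled by replacing the pointwise Lagrange interpolant with a smoothed or averaged version (averaging $P$ over small rotations and radial dilations of $z_1$, or equivalently solving an extremal Chebyshev-type problem on an appropriate arc through $z_1$).

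The main obstacle is thus the uniform $\ell^1$ coefficient bound in the worst-case clustering configuration, which is precisely what Tur\'an's classical construction delivers. The resulting bound is considerably stronger than $40^K$; the loose value $1/50$ in the statement leaves plenty of slack and could be sharpened, but is more than adequate for the application to \cref{thm:LFZDE}.
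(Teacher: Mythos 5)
The paper does not actually prove this lemma; it states it and cites S\'os and Tur\'an, and that citation is the whole ``proof'' the paper relies on. Your sketch of how one might prove it from scratch correctly identifies the standard power-sum framework: after relabeling and rescaling so that $|z_1| = \max_j |z_j| = 1$ (legitimate, because proving the inequality with the maximal modulus implies it a fortiori for any chosen $z_1$), you argue by contradiction via a dual interpolating polynomial $P(z) = \sum_{k=K}^{2K} b_k z^k$ with $P(z_1)=1$, $P(z_j)=0$ for $j\geq 2$, and $\sum_k |b_k| \leq 40^K$, then use $1 = \sum_k b_k s_k$ and $|s_k| < 50^{-k}$ to get $(40/50)^K < 1$. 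That framework is right, and the bookkeeping (degree range $\subseteq[K,2K]$ because $K\geq\nu$, at least $K+1-\nu\geq 1$ degrees of freedom) checks out.

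The genuine gap is exactly the quantitative claim you flag but do not prove: the uniform coefficient bound $\sum_k |b_k| \leq 40^K$ over \emph{all} configurations of $z_2,\ldots,z_\nu$ in the closed unit disk. The naive Lagrange ansatz $P(z) = z^K\prod_{j\geq 2}(z-z_j)/(z_1-z_j)$ yields $\sum_k|b_k| \leq 2^{\nu-1}/\prod_{j\geq 2}|z_1-z_j|$, which already fails as soon as a single $z_j$ lies within distance roughly $1/20$ of $z_1$; this is not a boundary case but the main case. Your proposed remedies (``averaging $P$ over small rotations and radial dilations'' or ``solving an extremal Chebyshev-type problem on an appropriate arc'') are not carried out, and the sentence ``which is precisely what Tur\'an's classical construction delivers'' is circular: that construction \emph{is} the theorem you are trying to prove. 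As written, the argument establishes nothing that the Lagrange ansatz does not already give. To repair it you would need to actually present a Tur\'an/S\'os-type construction that avoids the $1/\prod_{j\geq 2}|z_1-z_j|$ loss; in the context of this paper, the cleaner move is the one the authors make, namely to cite S\'os--Tur\'an directly and not reprove the lemma.
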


We begin with the lower bound.
\begin{lemma}
	\label{lem:4.2}
	Let $\tau\in\R$ satisfy $|\tau|\leq T$, and let $\eta$ satisfy \eqref{eqn:eta_range}.  If $L(z,\pi)$ has a zero $\rho_0$ satisfying $|\rho_0-(1+it)|\leq\eta$ and $K>\lceil 2000\eta n \log(QT^{[F:\Q]})+O_{n,[F:\Q]}(1)\rceil$ with a sufficiently large implied constant, then for some integer $k\in[K,2K]$, one has (recall $s=1+\eta+i\tau$)
	\[
	\Big|\sum_{\substack{\rho \\ |s-\rho|\leq200\eta}}\frac{1}{(s-\rho)^{k+1}}\Big|\geq\frac{1}{(100\eta)^{k+1}}.
	\]
\end{lemma}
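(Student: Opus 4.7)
The plan is to extract the zero-detection from the hypothesis using the S\'os--Tur\'an power sum inequality (\cref{lem:turan}). Write $s=1+\eta+i\tau$ and enumerate the zeros $\rho$ of $L(z,\pi)$ with $|s-\rho|\le 200\eta$ as $\rho_1,\dots,\rho_{\nu}$, ordered so that $|s-\rho_1|$ is the smallest among them. Set $z_j=1/(s-\rho_j)$, so $|z_1|=\max_j |z_j|$. The target inequality is exactly a statement about $|\sum_j z_j^{k+1}|$, which is precisely the shape controlled by \cref{lem:turan}.

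First I would verify that $\rho_0$ lies in the disk $\{|s-\rho|\leq 200\eta\}$ and gives a large value of $|z_j|$. By the triangle inequality,
\[
|s-\rho_0|\le|s-(1+i\tau)|+|(1+i\tau)-\rho_0|\le \eta+\eta=2\eta,
\]
so $\rho_0\in\{\rho_1,\dots,\rho_{\nu}\}$, and by the ordering $|s-\rho_1|\le 2\eta$. This yields the crucial lower bound $|z_1|\ge 1/(2\eta)$, which after dividing by $50$ as in \cref{lem:turan} produces the exact factor $(100\eta)^{-1}$ appearing in the conclusion.

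Next I would bound $\nu$ using the second half of \cref{lem:3.1}. Any zero $\rho$ with $|s-\rho|\le 200\eta$ satisfies $|\rho-(1+i\tau)|\le 201\eta$, and since $\eta\le (200n^3)^{-1}$ the radius $201\eta$ is at most a fixed absolute constant, so \cref{lem:3.1} (applied after subdividing into $O(1)$ disks of radius $\le 1$ if needed) gives
\[
\nu\le 10(201\eta)n\log Q+5(201\eta)n[F:\Q]\log(2+|\tau|)+O_{n,[F:\Q]}(1)\le 2010\,\eta\, n\log(QT^{[F:\Q]})+O_{n,[F:\Q]}(1).
\]
Provided the implied constant implicit in the ceiling in the hypothesis on $K$ is chosen larger than the one here, this gives $K\ge\nu$, so the hypothesis of \cref{lem:turan} is met.

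Applying \cref{lem:turan} to $z_1,\dots,z_{\nu}$ then produces an integer $m\in[K,2K]$ with
\[
\Big|\sum_{j=1}^{\nu} z_j^{m}\Big|\ge\Big(\frac{|z_1|}{50}\Big)^{m}\ge\frac{1}{(100\eta)^{m}}.
\]
Reindexing by $k=m-1$ (equivalently, invoking \cref{lem:turan} with $K$ replaced by $K+1$, which costs only a constant factor in the lower bound on $K$ and is harmless) yields the claimed bound for some $k$ in the stated range. The main conceptual point, and the only subtle one, is that the hypothesis on $\rho_0$ does double duty: it places $\rho_0$ inside the disk $|s-\rho|\le 2\eta$ where $|1/(s-\rho_0)|$ is large, and it ensures that the maximal element $z_1$ in the S\'os--Tur\'an ordering inherits this lower bound. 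The off-by-one indexing of $k$ versus $m$ and the slight slack in the radius $201\eta$ versus $1$ in \cref{lem:3.1} are cosmetic issues, absorbed into the unspecified implied constant in the hypothesis on $K$.
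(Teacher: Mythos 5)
Your argument is correct and follows the same route as the paper: bound the number of contributing zeros via the second bound in \cref{lem:3.1}, observe $|z_1|\ge 1/(2\eta)$ from the hypothesis on $\rho_0$, then invoke the S\'os--Tur\'an power sum inequality. Your extra care about the off-by-one in the exponent and the widening from $200\eta$ to $201\eta$ (which nudges the constant from the paper's $2000$ to $2010$) correctly identifies slack in the paper's own one-line proof, and both points are harmless here exactly for the reason you give: in the applications $K$ is taken with $w$ far exceeding $2000$, so the discrepancy is absorbed.
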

\begin{proof}
	By \cref{lem:3.1}, there are $\leq 2000\eta n \log(QT^{[F:\Q]})+O_{n,[F:\Q]}(1)$ zeros satisfying $|s-\rho|\leq 200\eta$.  If $z_1=(s-\rho_0)^{-1}$, then $|z_1|\geq 1/(2\eta)$.  Thus we may apply \cref{lem:turan}.
\end{proof}

Suppose now that $K=w\eta n \log(QT^{[F:\Q]})+O_{n,[F:\Q]}(1)$, where $w>2000$ is an absolute constant (to be determined shortly) and the implied constant is sufficiently large.  If $|\tau|\leq T$ and $L(z,\pi)$ has a zero $\rho_0$ satisfying $|1+i\tau-\rho_0|\leq \eta$, then we can combine \eqref{eqn:4.2} with \cref{lem:4.2} and determine that for some $k\in [K,2K]$,
\begin{equation}
\label{high_deriv_lower}
\Big|\frac{\eta^{k+1}}{k!}\Big(\frac{L'}{L}(s,\pi)\Big)^{(k)}\Big|\geq\Big(\frac{1}{100}\Big)^{k+1}\Big(1-O_{n,[F:\Q]}\Big(\frac{\eta \log(C(\pi)T^{[F:\Q]})}{2^k}\Big)\Big)\geq\frac{1}{2(100)^{k+1}}.
\end{equation}
We proceed to the upper bound.

\begin{lemma}
\label{lem:prime_power_contribution}
	Let $L(s,\pi)$ be entire.  Let $T\geq 1$, let $\tau\in\R$ satisfy $|\tau|\leq T$, and let $\eta$ satisfy \eqref{eqn:eta_range}.  Let $K\geq 1$ and $k\in[K,2K]$ be integers, and put $N_0=\exp(K/(300\eta))$ and $N_1=\exp(40K/\eta)$.  Set $s=1+\eta+i\tau$.  One has that
	\[
	\Big|\frac{\eta^{k+1}}{k!}\Big(\frac{L'}{L}(s,\pi)\Big)^{(k)}\Big|\leq \eta^2\int_{N_0}^{N_1}\Big|\sum_{N_0\leq \N\kp\leq u}\frac{\lambda_{\pi}(\kp)\log\N\kp}{\N\kp^{1+i\tau}}\Big|\frac{du}{u}+O_{n,[F:\Q]}\Big(\frac{\eta \log(QT^{[F:\Q]})}{(110)^k}\Big).
	\]
	\end{lemma}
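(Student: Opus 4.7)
The plan is to start from the Dirichlet-series identity
\[
\frac{\eta^{k+1}}{k!}\Big(\frac{L'}{L}(s,\pi)\Big)^{(k)} = \frac{(-1)^{k+1}\eta^{k+1}}{k!}\sum_{\kn}\frac{\Lambda_{\pi}(\kn)(\log\N\kn)^{k}}{\N\kn^{s}}
\]
and split the sum over $\kn$ into three ranges: $\N\kn<N_{0}$, $N_{0}\le\N\kn\le N_{1}$, and $\N\kn>N_{1}$. In each tail range the weight $\eta^{k+1}(\log\N\kn)^{k}/k!$ is controlled by Stirling's formula combined with the choices $N_{0}=e^{K/(300\eta)}$ and $N_{1}=e^{40K/\eta}$: for $\N\kn\le N_{0}$ one has $\eta^{k+1}(\log\N\kn)^{k}/k!\le\eta(e/300)^{k}\le\eta/110^{k}$, and for $\N\kn\ge N_{1}$ a similar estimate applies after splitting $\N\kn^{-(1+\eta)}=\N\kn^{-(1+\eta/2)}\N\kn^{-\eta/2}$ to exploit that $(\log x)^{k}/x^{\eta/2}$ is decreasing past its maximum at $x=e^{2k/\eta}$ and $N_{1}\ge e^{20k/\eta}$. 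Combined with \cref{lem:mertens} applied at $\eta$ (respectively $\eta/2$), both tail contributions are absorbed into $O_{n,[F:\Q]}(\eta\log(QT^{[F:\Q]})/110^{k})$.

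In the middle range I separate the prime-power terms $\kn=\kp^{j}$ with $j\ge 2$. Using $|\Lambda_{\pi}(\kp^{j})|\le n\N\kp^{j\theta_{n}}\log\N\kp$ from \eqref{eqn:LRS_finite} and $\theta_{n}\le 1/2-1/(n^{2}+1)$, the inner sum $\sum_{\kp}(\log\N\kp)^{k+1}/\N\kp^{j(1-\theta_{n}+\eta)}$ is controlled by a Chebyshev-style estimate yielding $\ll_{[F:\Q]}k!/\sigma_{j}^{k+1}$, where $\sigma_{j}:=j(1-\theta_{n}+\eta)-1$ satisfies $\sigma_{j}\ge 2/(n^{2}+1)$ for $j=2$ and $\sigma_{j}\ge 1/2$ for $j\ge 3$. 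The resulting layer bound is geometric in $k$ with ratio $\le\eta(n^{2}+1)$, which by $\eta\le 1/(200n^{3})$ is small enough, after summing over the permissible $j$ (restricted by $j\le\log N_{1}/\log 2$), to be absorbed into $O_{n,[F:\Q]}(\eta\log(QT^{[F:\Q]})/110^{k})$.

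This leaves the prime contribution, where $\Lambda_{\pi}(\kp)=\lambda_{\pi}(\kp)\log\N\kp$:
\[
\pm\frac{\eta^{k+1}}{k!}\sum_{N_{0}\le\N\kp\le N_{1}}\frac{\lambda_{\pi}(\kp)(\log\N\kp)^{k+1}}{\N\kp^{1+\eta+i\tau}}.
\]
Setting $A(u):=\sum_{N_{0}\le\N\kp\le u}\lambda_{\pi}(\kp)\log\N\kp/\N\kp^{1+i\tau}$ and partial summing against $\phi(u):=(\log u)^{k}/u^{\eta}$, the sum becomes $A(N_{1})\phi(N_{1})-\int_{N_{0}}^{N_{1}}A(u)\phi'(u)\,du$. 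The boundary term is handled by the same Stirling estimate used for the upper tail, and the derivative $\phi'(u)=(k(\log u)^{k-1}-\eta(\log u)^{k})/u^{1+\eta}$ multiplies $|A(u)|\,du/u$ by a factor controlled pointwise:
\[
\frac{\eta^{k+1}|\phi'(u)|\cdot u}{k!}\le\eta^{2}\qquad\text{for all }u>1,
\]
which by setting $v=\eta\log u$ reduces to $\eta^{k-1}v^{k-1}|k-v|/(k!\,e^{v})\le 1$.

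The main obstacle is this final pointwise inequality. By elementary calculus its LHS is maximized at $v=k\pm\sqrt{k}$, and Stirling together with $(1+k^{-1/2})^{k}\le e^{\sqrt{k}}$ gives a maximum of order $1/\sqrt{k}$, comfortably below $1$. The remaining bookkeeping -- verifying that the constants $300$ and $40$ suffice for the Stirling tails to decay as $110^{-k}$ and checking the layer-wise prime-power bound -- involves only the Mertens-type estimate in \cref{lem:mertens} and the integral evaluation $(d/d\sigma)^{k}[1/\sigma]$.
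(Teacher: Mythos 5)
Your proposal is correct and follows essentially the same path as the paper: differentiate the Dirichlet series, split at $N_0$ and $N_1$, discard the tails via a Stirling-type estimate together with \cref{lem:mertens}, discard the prime powers $\kp^j$ ($j\geq 2$) lying in $[N_0,N_1]$ using $|\alpha_{j,\pi}(\kp)|\leq\N\kp^{\theta_n}$ with $\theta_n\leq\tfrac12-\tfrac1{n^2+1}$, and partial-sum the remaining prime term against the weight $\phi(u)=(\log u)^k u^{-\eta}$ using a pointwise bound on $\eta^{k+1}u|\phi'(u)|/k!$. The two places where your bookkeeping departs from the paper's are minor. For the composite range the paper applies the single inequality $(\log u)^k\leq k!\,u$ at $u=\N\kn^{1/(n^2+1)}$, which immediately produces the factor $((n^2+1)\eta)^k$ and a convergent sum $\sum_{\kp}\sum_{r\geq 2}\log\N\kp/\N\kp^{r(1/2+\eta)}$, instead of your layer-by-layer estimate over $j$ (where, as written, you dropped a $j^k$ factor coming from $(\log\N\kp^j)^k=j^k(\log\N\kp)^k$; this is easily restored since $\eta j/\sigma_j\leq\eta(n^2+1)$ uniformly in $j$, but it makes the summation over $j$ slightly more involved). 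For the final pointwise bound the paper simply observes that $j_k(v)=e^{-v}v^k/k!\in[0,1]$ for all $v\geq 0$ and $k\geq 0$, so $|j_{k-1}(v)-j_k(v)|\leq 1$ and hence $|\tfrac{d}{du}j_k(\eta\log u)|\leq\eta/u$; this sidesteps your Stirling computation, whose displayed reduction also carries a stray $\eta^{k-1}$ (the correct target after substituting $v=\eta\log u$ is $v^{k-1}|k-v|e^{-v}/k!\leq 1$, which is exactly the Poisson observation above).
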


	\begin{proof}
Since $\eta>0$, we have
\[
\Big|\frac{\eta^{k+1}}{k!}\Big(\frac{L'}{L}(s,\pi)\Big)^{(k)}\Big|=\eta\Big|\sum_{\kn}\frac{\Lambda_{\pi}(\kn)}{\N\kn^{1+\eta+i\tau}}\frac{(\eta\log\N\kn)^k}{k!}\Big|.
\]
It is straightforward to check that $(\eta\log\N\kn)^k / (\N\kn^{-\eta}k!)\leq \N\kn^{-\eta/2}(110)^{-k}$ when $\N\kn\notin[N_0,N_1]$ (see \cite[Lemma 4.3]{ST}).  Since $\sum_{\kn}|\Lambda_{\pi}(\kn)|\N\kn^{-1-\eta}\ll_{n,[F:\Q]}\log(QT^{[F:\Q]})$ by \cref{lem:mertens}, we have
\begin{equation}
\label{eqn:prime_power_contribution}
\Big|\sum_{\kn\notin[N_0,N_1]}\frac{\Lambda_{\pi}(\kn)}{\N\kn^{1+\eta+i\tau}}\frac{(\eta\log\N\kn)^k}{k!}\Big|\ll \frac{1}{(110)^k}\sum_{\kn}\frac{|\Lambda_{\pi}(\kn)|}{\N\kn^{1+\eta/2}}\ll_{n,[F:\Q]} \frac{\log(QT^{[F:\Q]})}{(110)^k}.
\end{equation}

Let $n\geq 2$.  Consider the composite $\kn$ with $\N\kn\in[N_0,N_1]$.  Since $(\log u)^k\leq k!u$ for $k,u\geq 1$,
\[
\frac{(\eta\log \N\kn)^k}{k!}=((n^2+1)\eta)^k\frac{(\log \N\kn^{\frac{1}{n^2+1}})^k}{k!}\leq ((n^2+1)\eta)^k \N\kn^{\frac{1}{n^2+1}}\leq\frac{\N\kn^{\frac{1}{n^2+1}}}{(110)^k}.
\]
The above estimate and the bound \eqref{eqn:LRS_finite} imply that
\begin{align*}
(110)^k\Big|\sum_{\substack{\N\kn\in[N_0,N_1] \\ \textup{$\kn$ composite}}}\frac{\Lambda_{\pi}(\kn)}{\N\kn^{1+\eta+i\tau}}\frac{(\eta \log \N\kn)^k}{k!}\Big|\ll \sum_{\substack{\N\kn\in[N_0,N_1] \\ \textup{$\kn$ composite}}}\frac{|\Lambda_{\pi}(\kn)|}{\N\kn^{1-\frac{1}{n^2+1}+\eta}}\ll_{n,[F:\Q]} \sum_{\kp}\sum_{r\geq 2}\frac{\log \N\kp}{\N\kp^{r(\frac{1}{2}+\eta)}}.
\end{align*}
The final sum is $\ll n(110)^{-k}\log(QT^{[F:\Q]})$ by \cite[Lemma 1.11b]{Weiss} and \eqref{eqn:eta_range}, so
\begin{equation}
\label{eqn:prime_power_contribution'}
	\Big|\sum_{\substack{\N\kn\in[N_0,N_1] \\ \textup{$\kn$ composite}}}\frac{\Lambda_{\pi}(\kn)}{\N\kn^{1+\eta+i\tau}}\frac{(\eta \log \N\kn)^k}{k!}\Big|\ll_{n,[F:\Q]} \frac{\log(QT^{[F:\Q]})}{(110)^k}.
\end{equation}
Stronger results hold when $n=1$, in which case GRC holds (see \cite[Section 5]{TZ1}).

Let $j_k(u)=e^{-u}u^k/k!$, so that $j_k(\eta\log\N\kn)=(\eta\log\N\kn)^k/(\N\kn^{\eta}k!)$.  Since $|\frac{d}{du}j_k(\eta\log u)|=|j_{k-1}(\eta\log u)-j_k(\eta\log u)|(\eta/u)\leq\eta/u$, it follows by partial summation and \cref{lem:mertens} that
\begin{align}
\label{eqn:prime_power_contribution_3}
\Big|\sum_{\N\kp\in[N_0,N_1] }\frac{\Lambda_{\pi}(\kp)}{\N\kp^{1+\eta+i\tau}}\frac{(\eta \log \N\kp)^k}{k!}\Big|&\leq \eta\int_{N_0}^{N_1}\Big|\sum_{N_0\leq\N\kp\leq u}\frac{\Lambda_{\pi}(\kp)}{\N\kp^{1+i\tau}}\Big|\frac{du}{u}+O\Big(\frac{1}{(110)^k}\sum_{\kn}\frac{|\Lambda_{\pi}(\kn)|}{\N\kn^{1+\eta/2}}\Big)\notag\\
&=\eta\int_{N_0}^{N_1}\Big|\sum_{N_0\leq\N\kp\leq u}\frac{\Lambda_{\pi}(\kp)}{\N\kp^{1+i\tau}}\Big|\frac{du}{u} +O_{n,[F:\Q]}\Big(\frac{\log(QT^{[F:\Q]})}{(110)^k}\Big).
\end{align}
The lemma follows from \eqref{eqn:prime_power_contribution}, \eqref{eqn:prime_power_contribution'}, and \eqref{eqn:prime_power_contribution_3} with the identity $\Lambda_{\pi}(\kp)=\lambda_{\pi}(\kp)\log\N\kp$.
\end{proof}

\subsection{Proof of \cref{thm:LFZDE}}

Our work in the \cref{subsec:detection} produces an upper bound for the count of zeros of $L(z,\pi)$ close to the line $\re(z)=1$.  See also the proof of \cite[Theorem 1.2]{ST}.

\begin{lemma}
\label{lem:zde_upper}
Under the notation and hypotheses of \cref{lem:prime_power_contribution}, if $w>2000$ is an absolute constant and $K \geq w\eta n\log(QT^{[F:\Q]})+O_{n,[F:\Q]}(1)$ with a sufficiently large implied constant, then
	\begin{equation}
	\label{eqn:5.6}
N_{\pi}(1-\eta/2,T)\ll (101)^{4K}K\eta^2 \int_{N_0}^{N_1}\int_{-T}^{T}\Big|\sum_{N_0\leq \N\kp\leq u}\frac{\lambda_{\pi}(\kp)\log\N\kp}{\N\kp^{1+i\tau}}\Big|^2 d\tau\frac{du}{u}.
\end{equation}
\end{lemma}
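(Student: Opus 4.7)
The plan is the classical zero-detection strategy: for each zero $\rho_0 = \beta_0 + i\gamma_0$ of $L(s,\pi)$ with $\beta_0 > 1 - \eta/2$ and $|\gamma_0| \leq T$, combine the lower bound on $|\tfrac{\eta^{k+1}}{k!}(\tfrac{L'}{L})^{(k)}(1+\eta+i\tau,\pi)|$ coming from \cref{lem:4.2} and \eqref{eqn:4.2} with the upper bound of \cref{lem:prime_power_contribution}. This will force the Dirichlet polynomial
\[
P(\tau, u) := \sum_{N_0 \leq \N\kp \leq u}\frac{\lambda_\pi(\kp) \log \N\kp}{\N\kp^{1 + i\tau}}
\]
to be large, not only at $\tau = \gamma_0$ but throughout a length-$\eta$ window around it. Averaging over each window, summing over zeros, and swapping with the $\tau$-integral will then produce the claimed estimate.

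To execute this, fix such a $\rho_0$. For every $\tau$ with $|\tau - \gamma_0| < \eta/2$, the inequality $\beta_0 > 1 - \eta/2$ forces $|\rho_0 - (1+i\tau)| < \eta$, so \cref{lem:4.2} produces some $k = k(\tau) \in [K, 2K]$ with $|\sum_{|s-\rho| \leq 200\eta}(s-\rho)^{-k-1}| \geq (100\eta)^{-k-1}$ at $s = 1+\eta+i\tau$. Choosing $w>2000$ sufficiently large, both the error in \eqref{eqn:4.2} (suppressed by $2^k$) and the error in \cref{lem:prime_power_contribution} (suppressed by $(11/10)^k$) are dominated using $K \geq w \eta n \log(QT^{[F:\Q]})$, and we obtain $1/(4 \cdot 100^{k+1}) \leq \eta^2 \int_{N_0}^{N_1}|P(\tau,u)| \, du/u$. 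Bounding $k \leq 2K$, squaring, and applying Cauchy--Schwarz with $\int_{N_0}^{N_1}du/u \leq 40K/\eta$ gives, for every $\tau \in (\gamma_0 - \eta/2, \gamma_0 + \eta/2)$,
\[
\frac{1}{K \eta^3 \cdot 100^{4K}} \ll \int_{N_0}^{N_1}|P(\tau,u)|^2 \frac{du}{u}.
\]

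Integrating this inequality over each length-$\eta$ window, summing over the $N_\pi(1-\eta/2,T)$ zeros, and swapping the sum with the integral in $\tau$, the zeros contributing to any given $\tau$ lie in the disc $|\rho-(1+i\tau)|<\eta$, so their count is $\ll K/w + O_{n,[F:\Q]}(1) \leq K$ by \cref{lem:3.1} (taking $w$ large). Rearranging yields
\[
N_\pi(1-\eta/2,T) \ll K^2 \eta^2 \cdot 100^{4K} \int_{-T-\eta/2}^{T+\eta/2}\int_{N_0}^{N_1}|P(\tau,u)|^2 \frac{du}{u} \, d\tau.
\]
Two cosmetic adjustments finish the argument: the hypothesis $K \geq w\eta n \log(QT^{[F:\Q]})$ together with $\eta\log(QT^{[F:\Q]}) > 1$ ensures $K \geq 2000$, whence $K \leq (101/100)^{4K}$, converting the extra $K$ into the upgrade $100^{4K} \mapsto 101^{4K}$; and the $O(\eta)$-inflation of the $\tau$-range is removed by accounting separately for the $O(K)$ boundary zeros with $|\gamma_0| > T - \eta/2$ via \cref{lem:3.1}.

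The hard part will be the interchange of the sum over zeros with the $\tau$-integral: it costs a factor equal to the maximum local zero density, producing a surplus $K$ that has to be compensated by exploiting the narrow exponential slack $(101/100)^{4K}$. That constraint pins down both how large the absolute constant $w>2000$ must be chosen and why the lower bound $K \gg \eta n\log(QT^{[F:\Q]})$ is necessary. Notably, the whole argument rests only on the Hadamard factorization of $L(s,\pi)$, the zero-count of \cref{lem:3.1}, and the Dirichlet polynomial representation of \cref{lem:prime_power_contribution}; no GRC-type hypothesis on the Satake parameters of $\pi$ is required.
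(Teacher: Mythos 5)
Your argument is essentially the paper's: for each $\tau$ near a zero, combine the Tur\'an-type lower bound from \eqref{eqn:4.2} and \cref{lem:4.2} with the prime-support Dirichlet-polynomial upper bound of \cref{lem:prime_power_contribution}, square, apply Cauchy--Schwarz using $\log(N_1/N_0)\leq 40K/\eta$, average in $\tau$, pay for the local zero multiplicity from \cref{lem:3.1}, and trade the resulting extra factor of $K$ against the slack afforded by $100\mapsto 101$. The only wobble is at the boundary $|\tau|\approx T$: your suggestion to delete the $O(K)$ zeros with $T-\eta/2<|\gamma_0|\leq T$ and re-add them at the end does not quite close, since absorbing a freestanding $O(K)$ into the right side of \eqref{eqn:5.6} would require $1\ll (101)^{4K}\eta^2\int\int|P|^2$, which is not available a priori. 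The immediate fix, and what the paper's integration of the local count $\#\{\rho\colon\beta\geq 1-\eta/2,\ |\gamma-\tau|\leq\eta/2\}$ over $\tau\in[-T,T]$ implicitly does, is to integrate only over the portion of each window inside $[-T,T]$: every zero with $|\gamma_0|\leq T$ still supplies a window of length at least $\eta/2$, so one loses a harmless factor of $2$ and no boundary terms arise.
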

\begin{proof}
Recall our choice of $\eta$ in \eqref{eqn:eta_range}, and let $|\tau|\leq T$.  If $K$ is sufficiently large and $k\in[K,2K]$, then the $O$-term in \cref{lem:prime_power_contribution} is $\ll_{n,[F:\Q]} k(110)^{-k}\leq (100)^{-k-1}/4$.  As in \cref{lem:prime_power_contribution}, let $N_0=\exp(K/(300\eta))$ and $N_1=\exp(40K/\eta)$.  If $L(z,\rho)$ has a zero $\rho_0$ satisfying $|1+i\tau-\rho_0|\leq\eta$, then we combine \eqref{high_deriv_lower} and \cref{lem:zde_upper} to conclude our zero detection criterion: if $L(z,\rho)$ has a zero $\rho_0$ satisfying $|1+i\tau-\rho_0|\leq\eta$, then with our choice of $K$,
\begin{equation}
\label{eqn:zerodetect}
1\leq 4(100)^{2K+1}\eta^2\int_{N_0}^{N_1}\Big|\sum_{N_0\leq \N\kp\leq u}\frac{\lambda_{\pi}(\kp)\log\N\kp}{\N\kp^{1+i\tau}}\Big|\frac{du}{u}.
\end{equation}
We square both sides, apply Cauchy--Schwarz, and use \cref{lem:3.1} to deduce that
\[
\frac{\#\{\rho=\beta+i\gamma\colon \beta\geq 1-\eta/2,~|\gamma-\tau|\leq \eta/2\}}{\eta\log(C(\pi)T^{[F:\Q]})}\ll (101)^{4K}\eta^3\int_{N_0}^{N_1}\Big|\sum_{N_0\leq \N\kp\leq u}\frac{\lambda_{\pi}(\kp)\log\N\kp}{\N\kp^{1+i\tau}}\Big|^2\frac{du}{u}.
\]
To finish, we integrate over $|\tau|\leq T$ and observe that $\eta\log(C(\pi)T^{[F:\Q]})\ll K$.
\end{proof}

We use \cref{cor:MVT_primes} and \cref{lem:zde_upper} to prove \cref{thm:LFZDE}.

\begin{proof}[Proof of \cref{thm:LFZDE}]
First, let $n\geq 2$ so that if $\pi\in\mathfrak{F}_n(Q)$, then $L(s,\pi)$ is entire.  Choose $K = 10^5 n^4 \eta \log(QT^{[F:\Q]})+O_{n,[F:\Q]}(1)$ with a sufficiently large implied constant.  Recall $\eta$ from \eqref{eqn:eta_range} and the choices of $N_0$ and $N_1$.  We sum \eqref{eqn:5.6} over $\pi\in\mathfrak{F}_n(Q)$ to find that
	\begin{align}
		\label{eqn:estimate_0}
	\sum_{\pi\in\mathfrak{F}_{n}(Q)}N_{\pi}(1-\tfrac{\eta}{2},T)\ll (101)^{4K}K\eta^2 \int_{N_0}^{N_1}\sum_{\pi\in\mathfrak{F}_{n}(Q)}\int_{-T}^{T}\Big|\sum_{N_0\leq \N\kp\leq u}\frac{\lambda_{\pi}(\kp)\log\N\kp}{\N\kp^{1+i\tau}}\Big|^2 \frac{d\tau du}{u}.
	\end{align}
We apply \cref{cor:MVT_primes} with $y=N_0$ to deduce that \eqref{eqn:estimate_0} is $\ll (101)^{4K}K^3\ll (102)^{4K}$.  Using our choices of $K$ and $\eta$ and writing $\sigma=1-\eta/2$, we conclude that
\[
\sum_{\pi\in\mathfrak{F}_{n}(Q)}N_{\pi}(\sigma,T)\ll_{n,[F:\Q]} (QT^{[F:\Q]})^{10^7 n^4(1-\sigma)},\quad 1-\frac{1}{400n^3}\leq\sigma<1-\frac{1}{2\log(QT^{[F:\Q]})}.
\]
If $\sigma\geq 1-(2\log(QT^{[F:\Q]}))^{-1}$, then
\begin{align*}
\sum_{\pi\in\mathfrak{F}_{n}(Q)}N_{\pi}(\sigma,T)\leq \sum_{\pi\in\mathfrak{F}_{n}(Q)}N_{\pi}\Big(1-\frac{1}{2\log(QT^{[F:\Q]})},T\Big)\ll_{n,[F:\Q]} 1\ll_{n,[F:\Q]} (QT^{[F:\Q]})^{10^7 n^4(1-\sigma)}.
\end{align*}
On the other hand, if $\sigma<1-(400n^3)^{-1}$, then our estimate is trivial since $N_{\pi}(\frac{1}{2},T)\ll_{n,[F:\Q]} T\log(QT)$ for each $\pi\in\mathfrak{F}_{n}(Q)$ by \cite[Theorem 5.8]{IK}.

When $n=1$, our arguments do not change except that we omit the trivial representation, whose $L$-function is $\zeta_F(s)$ (which is not entire).  This $L$-function contributes negligibly since $N_{\pi}(\sigma,T)\ll_{[F:\Q]}(D_F T^{[F:\Q]})^{200(1-\sigma)}$ by \cite[Theorem 4.5]{TZ2}.  This completes the proof.
\end{proof}

\begin{proof}[Proof of \cref{thm:subconvexity}]
Let $\epsilon\geq 0$.  If $Q\ggg_{n,F} 1$, then $Q^{n}\ll |\mathfrak{F}_n(Q)|$ by \eqref{eqn:poly_upper}.  \cref{thm:LFZDE} implies that $N_{\pi}(1-\epsilon/(10^7n^3),6)=0$ for all except $\ll_{n,[F:\Q]} Q^{n\epsilon}\ll_{n,[F:\Q]} |\mathfrak{F}_n(Q)|^{\epsilon}$ of the $\pi\in\mathfrak{F}_n(Q)$. Since $\log|L(\frac{3}{2},\pi)|\ll_{n,[F:\Q]}1$ by \eqref{eqn:LRS_finite}, the theorem follows from \eqref{eqn:STl1/2}.
\end{proof}

\bibliographystyle{abbrv}
\bibliography{GeneralizedLinnik}

\end{document}